\numberwithin{equation}{section}
\newcommand{\R}{\mathbb{R}}
\newcommand{\C}{\mathbb{C}}
\newcommand{\N}{\mathbb{N}}
\newcommand{\re}{\mathrm{Re\,}}
\newcommand{\chookrightarrow}{\mathrel{\lhook\joinrel\relbar\kern-.8ex\joinrel\lhook\joinrel\rightarrow}}
\newcommand{\A}{\mathbb{A}}
\newcommand{\e}{\varepsilon}
\DeclareMathOperator{\Rey}{\textbf{r}}
\DeclareMathOperator{\spec}{spec}
\newtheorem{satz}{Proposition}[section] 
\newtheorem{lem}[satz]{Lemma} 
\newtheorem{thm}[satz]{Theorem}
\newtheorem{cor}[satz]{Corollary}
\definecolor{gray}{gray}{0.50}
\definecolor{lred}{rgb}{1.0,0.5,0.5}
\definecolor{dgreen}{rgb}{0,1,1}
\definecolor{luh-dark-blue}{rgb}{0.0, 0.313, 0.608}
\title[Two--phase thin film model with insoluble surfactant]{Modeling and Analysis of a two--phase thin film model with insoluble surfactant}
\author{Gabriele Bruell}
\address{Institute f\"ur Angewandte Mathematik, Leibniz Universit\"at Hannover, Welfengarten 1, 30167 Hannover, Germany}
\email{bruell@ifam.uni-hannover.de}
\subjclass{35K65, 35B35, 35Q35, 76D08}
\keywords{Two-phase thin film; surfactant; lubrication approximation; degenerate parabolic system; local well-posedness; linearized stability}
\begin{document}

\begin{abstract} In this paper we consider a two--phase thin film consisting of two immiscible viscous fluids endowed with a layer of insoluble surfactant on the surface of the upper fluid. The governing equations for the two film heights and the surfactant concentration are derived using a lubrication approximation. Taking gravitational forces into account but neglecting capillary effects, the resulting system of evolution equations is parabolic, strongly coupled, of second order and degenerated in the equations for the two film heights. Incorporating on the contrary capillary forces and neglecting the effects of gravitation, the system of evolution equations is parabolic, degenerated and of fourth--order for the film heights, strongly coupled to a second--order transport equation for the surfactant concentration. Local well--posedness and asymptotic stability are shown for both systems.
\end{abstract}

\maketitle

\section{Introduction}
\allowdisplaybreaks
The study of thin film equations constitutes a rich and complex area of research with a long list of contributions by physicists, engineers and mathematicians. Of particular fascination for many scientists is the role of surface tension and the influence of surface active agents (short \emph{surfactants}) on the dynamics of thin liquid films since this finds applications in various industrial and biomedical fields. As for instants in surfactant replacement therapy, coating flow technology or film drainage in emulsions and foams. Surfactants act on the surface of a fluid film by lowering the surface tension and induce a  twofold dynamic. On the one hand, the resulting surface gradients influence the dynamics of the fluid film. On the other hand, the surfactant itself spreads along the interface due to the surface tension gradients. The latter aspect is called \emph{Marangoni effect}.
Pioneering results on the dynamics of a thin fluid with insoluble surfactant are \cite{GG, JG, JG2}, where the approach via \emph{lubrication approximation} for thin liquid films is used and first numerical result are presented under consideration of different driving forces. Although, during the last decades there has been various modeling and numerical treatment of several aspects of the surfactant induced movement of thin films (see e.g. \cite{BE, BGN, BN, CMW, GG, JG, JG2, Mat}), only recently analytical investigations have started. Regarding the one--phase problem with insoluble surfactant, several authors contributed to the analysis of well--posedness and existence of global weak solutions for a coupled system of evolution equations describing the dynamics of the interface and the surfactant spreading under certain assumptions on the driving forces (see \cite{CT, EW1, EW2, GW, Ren} and references therein).  In absence of capillary and intermolecular forces but including gravitational forces, the through lubrication approximation derived system in \cite{EW1} is of second order and local well--posedness as well as asymptotic stability of steady states are proven. In particular, the surfactants in \cite{EW1} are considered to be soluble, which leads to an additional evolution equation for the surfactant distribution in the bulk. Investigating the dynamics of a two--phase thin film flow with insoluble surfactant, we resort not only to results for thin film equations with surfactant, but also to the analytical studies of two--phase thin films. As for instance in \cite{EMM} local well--posedeness and asymptotic stability of a thin--film approximation of the two--phase Stokes problem are investigated by methods of \emph{semigroup theory}  and the \emph{principal of linearized stability}.
 A similar approach has also been successfully applied in \cite{EM13} to prove local existence and stability results for a strongly coupled fourth--order  degenerated  parabolic system modeling the motion of two thin fluid films in the presence of gravity and capillary forces.

In this paper a mathematical model for the evolution of a two--phase flow with insoluble surfactant is presented. The two--phase flow consists of two immiscible, incompressible Newtonian and viscous thin liquid films on top of each other on a solid substrate. We assume that there is no contact angle between the two--phase flow and the bottom, which places the setting in the context of complete wetting. The interface of the upper fluid is endowed with a layer of insoluble surfactant. 
Based on the full Navier--Stokes equation describing the motion of the two viscous fluid films and an advection--transport equation for the spreading of surfactant on the free surface, we derive a system of degenerated strongly coupled parabolic equations for the evolution of the two film heights and the surfactant concentration, by lubrication approximation and cross--sectional averaging.
Depending on the considered driving forces, the evolution equations for the film heights are of fourth order (if capillary forces are taken into account) or of second order (if gravitational forces are considered and capillary effects neglected). Since both systems appear to have a very similar structure, we formulate them together in one set of equations. The system describing the gravity driven flow can then be recovered  by setting $k=1$ and the capillary driven flow by $k=3$ in \eqref{wEE} below. 
Letting $f=f(t,x)$ and $g=g(t,x)$ denote the two film heights and $\Gamma=\Gamma(t,x)$ the surfactant concentration, respectively, at time $t\geq 0$ and position $x\in (0,L)$ the system  
\begin{align} 
\nonumber
&\partial_t f =\partial_x \left[ f\left(\displaystyle{\frac{R_kf^2}{3}}\partial_x^k f + S_k\mu \left(\displaystyle{\frac{f^2}{3}} +\displaystyle{\frac{fg}{2}} \right) \partial_x^k(f+g) -\mu 	\displaystyle{\frac{f}{2}}\partial_x\sigma(\Gamma)\right)\right],\\[15pt]
\label{wEE}
& \partial_t g = \partial_x \left[g\left( \displaystyle{\frac{R_kf^2}{2}} \partial_x^k f +S_k\left(  \displaystyle{\frac{g^2}{3}} +\mu \left( \displaystyle{\frac{f^2}{2}}+fg\right)\right)\partial_x^k (f+g) 
-\left(\mu f +\displaystyle{\frac{g}{2}} \right) \partial_x \sigma(\Gamma)\right)\right], \\[15pt]
\nonumber
&\partial_t \Gamma =\partial_x\left[\Gamma\left( \displaystyle{\frac{R_kf^2}{2}} \partial_x^k f +S_k\left(  \displaystyle{\frac{g^2}{2}}+\mu \left(\displaystyle{\frac{f^2}{2}}+fg\right)\right)\partial_x^k (f+g)
- \left(\mu f+g\right) \partial_x \sigma(\Gamma)\right)+ D\partial_x\Gamma\right]
\end{align}
models the motion of a two--phase flow with insoluble surfactant in the presents of gravitational forces (capillary effects are neglected), when $k=1$, and in the presents of capillary effects (gravitational forces are neglected), when $k=3$. Observe that if $k=1$ all three evolution equations in \eqref{wEE} are of second order whereas in the case $k=3$ the equations for the film heights are of fourth order coupled to a second--order equation for the surfactant concentration.
The function $\sigma=\sigma(\Gamma)$ denotes the surface tension, which depends decreasingly on the surfactant concentration $\Gamma$. The constants $R_k, S_k$, $k=1,3$, are given by
\begin{alignat*}{2}
&R_1:= G_1-G_2\mu,\quad &&S_1:=G_2,\\
&R_3:= -\sigma_1^c,\quad &&S_3:=-\sigma_2^c,
\end{alignat*}
where $G_1= \rho_1 G,G_2=\rho_2G$ with $G$ being a modified gravitational constant. Furthermore, $\rho_1,\rho_2$ and  $\sigma_1^c, \sigma_2^c$ represent the densities and the surface tension coefficients of the lower and the upper fluid, respectively, and $\mu:=\frac{\mu_2}{\mu_1}$ measures the relative viscosity between the fluids, where $\mu_1$ denotes the viscosity of the lower and $\mu_2$  the viscosity of the upper fluid.
The positive constant $D$ represents the diffusion of surfactants along  the surface. 

The outline of the paper is as follows. In Section \ref{PM} we present a derivation of the system of evolution equations \eqref{wEE} by applying lubrication approximation to the governing equations of the motion of a two--phase flow and the surfactant spreading. Moreover, we provide an energy functional for the system \eqref{wEE}, which enables us to determine the set of steady states. In Section \ref{well posedness gravity} a well--posedness result for the gravity driven two--phase flow with insoluble surfactant is proven. Furthermore, we show asymptotic stability for steady states. The corresponding results for the capillary driven flow are established in Section \ref{well posedness capillary}.

The investigation of non--negative global weak solutions to the capillary driven system is subject of a forthcoming paper.

\section{Mathematical model}\label{PM}
We consider two viscous, incompressible Newtonian and immiscible thin films  on top of each other on a horizontal impermeable bottom at $z=0$ with lateral boundaries at $x=0,L$, occupying the regions $\Omega_1$, $\Omega_2$, respectively, with a layer of insoluble surfactant on the surface of the upper fluid. 
We assume the surface tension on the interface separating the fluids to be independent of external influences and the material outside of the two--phase flow to  be static and with zero pressure. Let $L$ be the length of the two--phase film and take the undisturbed film height $H$ to be given as small compared to the film length, that is  $\frac{H}{L} =\varepsilon$ with $\varepsilon \ll 1$.  
By cross--sectional averaging we assume the film to be uniform in one horizontal level and let $x$ and $z$ denote the horizontal and vertical direction, respectively. Further,  we denote the two film heights  by $f$ and $g$, so that the free surfaces at time $t\geq 0$ and position $x\in (0,L)$ are located at $z=f(t,x)$ and $z=(f+g)(t,x)$, see Figure \ref{Fig1}. The concentration of surfactant at time $t\geq 0$ and position $x\in (0,L)$ is given by $\Gamma(t,x)$.

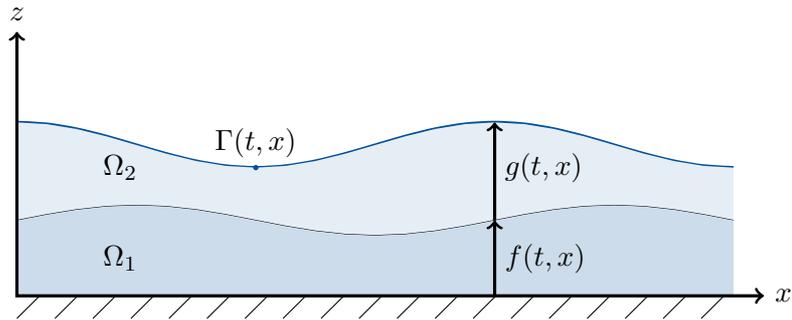
\begin{figure}[h]
\centering
\begin{tikzpicture}[domain=0:3*pi, scale=1] 
\draw[color=black] plot (\x,{0.3*cos(\x r)+2}); 
\draw[very thick, smooth, variable=\x, luh-dark-blue] plot (\x,{0.3*cos(\x r)+2}); 
\fill[luh-dark-blue!10] plot[domain=0:3*pi] (\x,{0.2*sin(\x r)+1}) -- plot[domain=3*pi:0] (\x,{0.3*cos(\x r)+2});
\draw[color=black] plot (\x,{0.2*sin(\x r)+1});
\fill[luh-dark-blue!20] plot[domain=0:3*pi] (\x,0) -- plot[domain=3*pi:0] (\x,{0.2*sin(\x r)+1});
\draw[very thick,<->] (3*pi+0.4,0) node[right] {$x$} -- (0,0) -- (0,3.5) node[above] {$z$};
\draw[very thick,->] (2*pi,1) -- (2*pi,2.3);
\node[right] at (2*pi,1.7) {$g(t,x)$};
\draw[very thick,->] (2*pi,0) -- (2*pi,1);
\node[right] at (2*pi,0.5) {$f(t,x)$};
\coordinate[label=above:{$\Gamma(t,x)$}] (A) at (pi,1.7);
\fill[color=luh-dark-blue] (A) circle (1pt);
\node[right] at (1,0.5) {$\Omega_1$};
\node[right] at (1,1.7) {$\Omega_2$};
\draw[-] (0,-0.3) -- (0.3, 0);
\draw[-] (0.5,-0.3) -- +(0.3, 0.3);
\draw[-] (1,-0.3) -- +(0.3, 0.3);
\draw[-] (1.5,-0.3) -- +(0.3, 0.3);
\draw[-] (2,-0.3) -- +(0.3, 0.3);
\draw[-] (2.5,-0.3) -- +(0.3, 0.3);
\draw[-] (3,-0.3) -- +(0.3, 0.3);
\draw[-] (3.5,-0.3) -- +(0.3, 0.3);
\draw[-] (4,-0.3) -- +(0.3, 0.3);
\draw[-] (4.5,-0.3) -- +(0.3, 0.3);
\draw[-] (5,-0.3) -- +(0.3, 0.3);
\draw[-] (5.5,-0.3) -- +(0.3, 0.3);
\draw[-] (6,-0.3) -- +(0.3, 0.3);
\draw[-] (6.5,-0.3) -- +(0.3, 0.3);
\draw[-] (7,-0.3) -- +(0.3, 0.3);
\draw[-] (7.5,-0.3) -- +(0.3, 0.3);
\draw[-] (8,-0.3) -- +(0.3, 0.3);
\draw[-] (8.5,-0.3) -- +(0.3, 0.3);
\draw[-] (9,-0.3) -- +(0.3, 0.3);
\end{tikzpicture} 
\caption{Scheme of the two--phase thin film flow with insoluble surfactant}\label{Fig1}
\end{figure}

As common in the analysis of thin films (see e.g. \cite{GG, JG, JG2} for pioneering works), we apply a lubrication approximation to the governing equations for the dynamics of the fluids and the surfactant concentration together with suitable boundary conditions, in order to derive the system of evolution equations \eqref{wEE} for the two film heights $f,g$ and the concentration of surfactant $\Gamma$ on the fluid--gas interface. 
Setting $i=1,2$, the velocity field of the fluid contained in $\Omega_i$ will be denoted by $v_i=(u_i,w_i)$, where each particle of the fluid contained in $\Omega_i$  is moving with the velocity $u_i(t,x,z)$ in horizontal and $w_i(t,x,z)$ in vertical direction. The velocity and the pressure, given by $p_i$, are functions of position and time. The gravitational acceleration is given by $\gamma=(0,G)$. Moreover, assuming the fluid to be incompressible and Newtonian, the density and viscosity of the fluids, denoted by $\rho_i$ and $\mu_i$ are material constants. 

The governing equations for the motion of a viscous, incompressible and Newtonian fluid occupying $\Omega_i,i=1,2,$ is given by the \emph{Navier--Stokes equation}
\begin{equation}\label{Ns}
\rho_i(\partial_t v_i+ (v_i \cdot \nabla )v_i ) = \mu_i \Delta v_i - \nabla p_i - \rho_i \gamma.
\end{equation}
Further, conservation of mass for incompressible fluids implies the \emph{continuity equation}
\begin{equation}\label{Ce}
\partial_x u_i +\partial_z w_i =0 
\end{equation}
in $\Omega_i, \,i=1,2$.
The dynamics of thin liquid films is strongly influenced by surface tension (cf. e.g. \cite{Lam}). Since surface tension affects only the free surface, it does not appear in the Navier--Stokes equations, but contributes to the motion of a fluid through boundary conditions.
The surfactant spreading on the free surface $z=f+g$ is governed by the \emph{advection--transport equation}
\begin{equation}\label{Ss}
\partial_t \Gamma +\partial_x(u_2\Gamma-D\partial_x\Gamma)=0,
\end{equation}
where $D>0$ is the surface diffusion coefficient. 
Note that additionally to the diffusion, the spreading of surfactant is also induced by surface tension gradients, which  occur due to the present of surfactant itself (Marangoni effect). This effect will enter into the tangential balance equation \eqref{tbC}.
Since the bottom $z=0$ is impermeable, there is no transfer across this boundary and the perpendicular velocity at the bottom is zero. Furthermore, we suppose a \emph{no--slip boundary condition} on $z=0$. Assuming the velocity field to be continuous across the immiscible fluid--fluid interface $z=f$ and that there is no diffusion between the fluids, the velocity fields $v_1$ and $v_2$ equal at $z=f$. We have
\begin{alignat}{2}
\label{BB}v_1 &=0\qquad &&\mbox{on}\quad z=0,\\
\label{nos2}v_1&=v_2\qquad &&\mbox{on}\quad z=f.
\end{alignat}
Due to interfacial tension, the \emph{stress balance equations}
\begin{equation*}
\left\{ \begin{array}{lcl}
 [\Sigma(v_1,p_1)-\Sigma(v_2,p_2)]n_1&=& \sigma_1 \kappa_f n_1+ \nabla_s \sigma_1 \\[5pt]
 \Sigma(v_2,p_2)n_2&=& \sigma_2 \kappa_2 n_2+ \nabla_s \sigma_2 
\end{array}\right. \qquad \begin{array}{lcl}&\mbox{on}& \; z=f,\\[5pt]  &\mbox{on}& \; z=f+g, \end{array}
 \end{equation*}
where $\Sigma(v_i,p_i) = \frac{1}{2}\mu_i(\nabla v_i + \nabla v_i^{T})- p_i $ denotes the stress tensor, $\kappa_i$ the mean curvature and $\sigma_i$ the surface tension of the interface being the upper boundary of the domain $\Omega_i$ and $\nabla_s \sigma_i$ the gradient of $\sigma_i$ in direction of the surface. Here, $n_i$ denotes the outher normal pointing outwards of $\Omega_i$, $i=1,2$. Multiplying the stress balance equation by $n_i$, yields the \emph{normal stress balance equation}
\begin{equation}\label{nbC} \left\{ \begin{array}{lcl} 
([\Sigma(v_1,p_1)-\Sigma(v_2,p_2)]n_1)\cdot n_1 &=& \sigma_1\kappa_1 \\[5pt]
 (\Sigma(v_2,p_2) n_2) \cdot n_2 &=& \sigma_2\kappa_2
 \end{array}\right.\qquad \begin{array}{lcl}&\mbox{on}& \; z=f,\\[5pt]  &\mbox{on}& \; z=f+g, \end{array}
 \end{equation}
 where the mean curvatures $\kappa_1, \kappa_2$ are  given by 
\begin{align*}
\kappa_1 = \frac{\partial_x^2 f}{(1+ |\partial_x f|^2)^{\frac{3}{2}}},\qquad \kappa_2 = \frac{\partial_x^2 (f+g)}{(1+ |\partial_x (f+g)|^2)^{\frac{3}{2}}}.
\end{align*}
The surface tension coefficient $\sigma_1$ on $z=f$ is constant, whereas the surface tension of the free surface of the upper fluid depends non--increasingly on the surfactant concentration $\sigma_2=\sigma_2(\Gamma)$. 
Thus,  multiplying the stress balance equation by the tangential vector $t_i$, leads to the \emph{tangential stress balance equation}
\begin{equation}\label{tbC}\left\{\begin{array}{lcl}
 ([\Sigma(v_1,p_1)-\Sigma(v_2,p_2)]n_1) \cdot t_1 &=& 0 \\[5pt]
 (\Sigma(v_2,p_2) n_2) \cdot t_2 &=&  \displaystyle{\frac{\partial_x \sigma_2(\Gamma)}{\sqrt{1+|\partial_x^2 (f+g)|}}}
 \end{array}\right.\qquad \begin{array}{lcl}&\mbox{on}& \; z=f,\\[5pt]  &\mbox{on}& \; z=f+g, \end{array}
 \end{equation} 
 where 
 \begin{align*}
 \nabla_s \sigma_2\cdot t_2 =\displaystyle{\frac{\partial_x \sigma_2(\Gamma)}{\sqrt{1+|\partial_x^2 (f+g)|}}}.
 \end{align*}
Observe that the normal stress balance is controlled by the capillary forces, whereas the Marangoni forces, induced by the surfactant, enter the tangential stress balance equation. Furthermore, we prescribe a \emph{kinematic boundary condition} on the interfaces located at $z=h_i$, $i=1,2$,
\begin{align}
\label{kin2}\partial_t h_i + u_2 \partial_x h_i = w_2 \qquad \mbox{on} \quad z=h_i,
\end{align}
where $h_1:=f$ and $h_2:= f+g$. 
We summarize that the motion of the two--phase thin film flow with insoluble surfactant is described by the Navier--Stokes equation \eqref{Ns} together with the continuity equation \eqref{Ce}, the surfactant spreading equation \eqref{Ss} and the boundary conditions \eqref{BB}--\eqref{kin2}.

\subsection{Lubrication Approximation}
In the frame of thin film equations, the method of lubrication approximation (cf. e.g. \cite{GG, JG, JG2}) enables to simplify the system of equations by rescaling the parameters and considering the system in the limit, where the relative film height $\frac{H}{L}=\varepsilon$ tends to zero. The obtained equations do not represent the complex mechanisms of the original problem completely, but still preserve the main features.
According to \cite{GG,JG} an appropriate scaling for the lubrication approximation of thin films with surfactant is given by 
\[\bar{x} = \frac{x}{L},\quad \bar{z}= \frac{z}{H}, \quad\bar{t}= \varepsilon^3 \tau_0 t,\]
with unit of time $\tau_0=\frac{1}{s}$, and rescaled functions
\begin{equation*}
\bar{f}(\bar{t},\bar{x})=\frac{1}{H}f(t,x),\quad \bar{g}(\bar{t},\bar{x})=\frac{1}{H}g(t,x)
\end{equation*}
as well as
\begin{alignat*}{2}
&u(t,x,z) = \e^3\tau_0L \bar{u}(\bar{t},\bar{x},\bar{z}),\qquad &&w(t,x,z) = \e^4\tau_0L \bar{w}(\bar{t},\bar{x},\bar{z}), \\[5pt] 
& p(t,x,z) = \e\tau_0\mu \bar{p}(\bar{t},\bar{x},\bar{z}),\qquad &&\Gamma(t,x)= \Gamma_m \bar{\Gamma}(\bar{t},\bar{x}), 
\end{alignat*}
where $\Gamma_m$ represents the critical micelle concentration. As suggested by \cite{GG, JG} we choose the scaling $D= \e^3\tau_0L\bar{D}$ and
\begin{equation*}
\sigma_1= \mu_1\tau_0 L \sigma_1^c,\qquad \sigma_2=\mu_2\tau_0L\left(\sigma_2^c+\e^2\bar{\sigma}\right),
\end{equation*}
where $\sigma_2^c$ is the rescaled surface tension coefficient of the interface when $\bar{\Gamma}=\Gamma_m$ and $\bar{\sigma}$ the part of the surface tension coefficient, which depends on the the surfactant concentration. Recall that $\sigma_1= \mu_1\tau_0 L \sigma_1^c$ is constant, since the surface tension coefficient of the interface between the fluids is independent of $\Gamma$, for the insoluble surfactant is acting on the surface of the upper fluid only. Eventually, the fluids are contained in the regions $\bar{\Omega}_1:=\{0\leq \bar{z}\leq \bar{f}\}$ and $\bar{\Omega}_2:=\{\bar{f}\leq \bar{z}\leq \bar{f}+\bar{g}\}$. It can be easily checked that the rescaled set of equations describing the motion of the two--phase flow with insoluble surfactant is given by
 \begin{alignat}{2}
 \label{NS}& \hspace{-0.5cm} 
 \left\{\begin{array}{lll}
 \e^2 \Rey \left( \partial_{\bar{t}} \bar{u}_i +\bar{u}_i \partial_{\bar{x}} \bar{u}_i +  \bar{w}_i \partial_{\bar{z}} \bar{u}_i\right)&=& \left(\varepsilon^2\partial_{\bar{x}}^2 u_i +\partial_{\bar{z}}^2 \bar{u}_i\right) - \partial_{\bar{x}} \bar{p}_i, \\[5pt]
 \e^5 \Rey\left(\partial_{\bar{t}} \bar{w}_i + \bar{u}_i \partial_{\bar{x}} \bar{w}_i + \bar{w}_i \partial_{\bar{z}} \bar{w}_i\right)&=& \e^2\left(\e^2\partial_{\bar{x}}^2 \bar{w}_i +\partial_{\bar{z}}^2 \bar{w}_i\right) - \partial_{\bar{z}} \bar{p}_i -\displaystyle{\frac{\rho_i LG}{\mu_i \tau_0}}
 \end{array} \right.
  &&\mbox{in}\; \bar{\Omega}_i,  \\[5pt]
\label{C}& \; \partial_{\bar{x}} \bar{u}_i +\partial_{\bar{z}} \bar{w}_i =0\quad &&\;\mbox{in}\; \bar{\Omega}_i,  \\[5pt]
\label{MN}& \hspace{-0.5cm} 
\left\{\begin{array}{lll}
\bar{w}_1=\bar{u}_1 =0 \\[5pt]
\bar{w}_1=\bar{w}_2,\quad \bar{u}_1=\bar{u}_2 
\end{array}\right.
&& \begin{array}{lll} z=0 \\[10pt] \bar{z}=\bar{f}, \end{array} 
\\[10pt]
\label{KBC}&
\;\partial_{\bar{t}} \bar{h}_i + \bar{u}_i \partial_{\bar{x}} \bar{h}_i = \bar{w}_i &&\; \bar{z}=\bar{h}_i, \\[5pt]
\label{NBC}&\hspace{-0.5cm} 
 \left\{ \begin{array}{lcl} 
 ([\Sigma(\bar{v}_1,\bar{p}_1)-\Sigma(\bar{v}_2,\bar{p}_2)]\bar{n}_1)\cdot \bar{n}_1 &=& \frac{\mu_1\tau_0 \sigma_1^c\partial_{\bar{x}}^2\bar{f}\e}{\sqrt{1-|\partial_{\bar{x}}\bar{f}\e|^2}}, \\[5pt]
  (\Sigma(\bar{v}_2,\bar{p}_2) \bar{n}_2) \cdot \bar{n}_2 &=& \frac{\mu_2\tau_0 \sigma_2^c\partial_{\bar{x}}^2(\bar{f}+\bar{g})\e}{\sqrt{1-|\partial_{\bar{x}}(\bar{f}+\bar{g})\e|^2}}, 
  \end{array}\right.
&& \begin{array}{lll} \bar{z}=\bar{f} \\[10pt] \bar{z}=\bar{f}+\bar{g}, \end{array} 
\\[5pt]
\label{TBC}&\hspace{-0.5cm} 
\left\{ \begin{array}{lcl} 
([\Sigma(\bar{v}_1,\bar{p}_1)-\Sigma(\bar{v}_2,\bar{p}_2)]\bar{t}_1)\cdot \bar{t}_1 &=& 0,  \\[5pt]
 (\Sigma(\bar{v}_2,\bar{p}_2) \bar{t}_2) \cdot \bar{t}_2 &=& \frac{\mu_2\tau_0\e^2\partial_{\bar{x}}\bar{\sigma}(\bar{\Gamma})}{\sqrt{1-|\partial_{\bar{x}}(\bar{f}+\bar{g})\e|^2}}, 
 \end{array}\right.
&& \begin{array}{lll} \bar{z}=\bar{f} \\[5pt] \bar{z}=\bar{f}+\bar{g}, \end{array} 
\\[5pt]
\label{S}&\; \partial_{\bar{t}} \bar{\Gamma} + \partial_{\bar{x}}(\bar{u}_2 \bar{\Gamma} - D \partial_{\bar{x}} \bar{\Gamma}) =0 &&\; \bar{z}=\bar{f}+\bar{g},
  \end{alignat}
where $\bar{n}_i$, $\bar{t}_i$ are the rescaled outer normal and tangential vectors, respectively, and
\[\Sigma(\bar{v}_i,\bar{p}_i)=\mu_i \tau_0 \e \begin{pmatrix} 2\e^2 \partial_{\bar{x}}\bar{u}_i-\frac{\bar{p}_i}{\mu_i}  & \e^3 \partial_{\bar{x}}\bar{w}_i+\e\partial_{\bar{z}}\bar{u}_i \\
																\e^3 \partial_{\bar{x}}\bar{w}_i+\e \partial_{\bar{z}}\bar{u}_i & 2 \e^3 \partial_{\bar{x}}\bar{w}_i-\frac{\bar{p}_i}{\mu_i}\end{pmatrix}\]
is the stress tensor at the free surface located at $\bar{h}_i$ with respect to the rescaled variables. Moreover, $\Rey := \frac{\rho_i \tau_0^2 \e^3 L^2}{\mu_i}$ in \eqref{NS} is the so--called \emph{Reynold's number}, which is the ratio of inertial forces to viscous forces and characterizes whether the flow is \emph{laminar} (small Reynold's number) or \emph{turbulent} (high Reynold's number).
Observe that the lubrication approximation does not affect the continuity equation \eqref{C}, nor the conservation of mass and no--slip condition \eqref{MN}, the kinematic boundary condition \eqref{KBC} or the equation for the surfactant spreading \eqref{S}. However, the Navier--Stokes \eqref{NS} and the stress balance equations \eqref{NBC}, \eqref{TBC} reduce under lubrication approximation ($\e\rightarrow 0$) to
\begin{alignat}{2}
 \label{NS2}&  
 \left\{\begin{array}{lll}
\partial_{\bar{x}} \bar{p}_i+\partial_{\bar{x}}^2\bar{u}_i&=& 0, \\[5pt]
  \partial_{\bar{z}} \bar{p}-G_i&=& 0
 \end{array} \right.
  \quad &&\;\mbox{in}\; \bar{\Omega}_i,  \\[5pt]
\label{NBC2}&
 \left\{ \begin{array}{lcl} 
-\bar{p}_1 +\mu \bar{p}_2&=& \sigma_1^c\partial_{\bar{x}}^2\bar{f}, \\[5pt]
 -\bar{p}_2 &=& \sigma_2^c\partial_{\bar{x}}^2(\bar{f}+\bar{g}), 
  \end{array}\right.
&& \begin{array}{lll} \bar{z}=\bar{f} \\[10pt] \bar{z}=\bar{f}+\bar{g}, \end{array} 
\\[5pt]
\label{TBC2}&
\left\{ \begin{array}{lcl} 
\partial_{\bar{z}}\bar{u}_1  &=& \mu \partial_{\bar{z}}\bar{u}_2 ,  \\[5pt]
\partial_{\bar{z}}\bar{u}_2 &=&\partial_{\bar{x}}\bar{\sigma}(\bar{\Gamma}), 
 \end{array}\right.
&& \begin{array}{lll} \bar{z}=\bar{f} \\[5pt] \bar{z}=\bar{f}+\bar{g}, \end{array} 
  \end{alignat}
where 
\begin{equation}\label{Gi}
G_i:= \frac{\rho_iL}{\mu_i \tau_0}G,\qquad i=1,2,\qquad \mu:=\frac{\mu_2}{\mu_1}
\end{equation} are a modified gravitational constant depending on the density and viscosity of the fluid and the relative viscosity, respectively.

\subsection{Evolution equations}

Similar as for instance in \cite{EW1}, we use \eqref{C}--\eqref{KBC} and \eqref{S}--\eqref{TBC2} in order to derive evolution equations for the two film heights $f$, $g$ and the concentration of surfactant $\Gamma$. In order to simplify notations, we will skip the bar.

Integrating $(\ref{NS2})$  with respect to $z$ and using $(\ref{NBC2})$ we obtain equations for the pressure within the fluids contained in $\Omega_i,  i=1,2$,
\begin{align}
\label{AA}p_1(t,x,z)&= G_1(f(t,x)-z)=\mu p_2(t,x,f)-\sigma_1^c\partial_x^2f(t,x), \\[5pt]
\label{B}p_2(t,x,z)&= G_2(f(t,x)+g(t,x)-z)= \sigma_2^c \partial_x^2 (f+g)(t,x).
\end{align}
Plugging equation $(\ref{B})$ into $(\ref{AA})$, the pressure within the lower fluid is given by
\[p_1(t,x,z)=G_1(f(t,x)-z)= G_2\mu g(t,x) - \sigma_2^c\mu\partial_x^2(f+g)(t,x)-\sigma_1^c\partial_x^2f(t,x).\]
Differentiating with respect to $x$ and using $(\ref{NS})$ implies
\[-\partial_z^2 u_1(t,x,z)= G_1 \partial_x f(t,x)+ G_2\mu \partial_x g(t,x)- \sigma_2^c\mu\partial_x^3(f+g)(t,x)-\sigma_1^c\partial_x^3f(t,x),\]
hence, by $(\ref{TBC2})$,
\begin{align*}\partial_z u_1(t,x,z) = &- \left( G_1 \partial_x f(t,x) + G_2\mu \partial_x g(t,x)- \sigma_2^c\mu\partial_x^3(f+g)(t,x)-\sigma_1^c\partial_x^3f(t,x)\right)(f(t,x)-z)\\[5pt]
&+\displaystyle{\mu }\partial_z u_2(t,x,f).
\end{align*}
Integrating with respect to $z$ yields, in view of the no--slip boundary condition \eqref{MN},
\begin{align*}
\begin{split}
 u_1(t,x,z) = &- \left( G_1 \partial_x f(t,x) + G_2\mu \partial_x g(t,x) -\sigma_2^c\mu\partial_x^3(f+g)(t,x)-\sigma_1^c\partial_x^3f(t,x)\right)\\[5pt]
&\quad\times \left(f(t,x)z-\frac{1}{2}z^2\right)+\displaystyle{\mu }\partial_z u_2(t,x,f)z.
\end{split}
\end{align*}
Note that
\[\int_0^{f(t,x)} \partial_x u_1(t,x,z)\,dz = -w_1(t,x,z)=-\partial_t f(t,x)-u_1(t,x,f)\partial_x f(t,x),\]
by $(\ref{C})$--$(\ref{KBC})$. Thus
\[\partial_t f(t,x)+\partial_x \left(\int_0^{f(t,x)}u_1(t,x,z)\,dz\right)=0,\]
which is equivalent to
\begin{align}\label{D}
\begin{split}\hspace{-0.3cm}
\partial_t f(t,x)-\partial_x \displaystyle{\int}_0^{f(t,x)}&\Big\{\left( G_1 \partial_x f(t,x) +  G_2\mu \partial_x g(t,x)-  \sigma_2^c\mu\partial_x^3(f+g)(t,x)-\sigma_1^c\partial_x^3f(t,x)\right)\\[5pt] 
&\quad\times\left(f(t,x)z-\frac{1}{2}z^2\right)-\displaystyle{\mu }\partial_z u_2(t,x,f)z\Big\}\,dz=0.
\end{split}
\end{align}
In order to obtain an evolution equation for $f$, which depends only on $g,\Gamma$ and $f$ itself we need to determine an equation for $u_2$. Recalling $(\ref{B})$ and using $(\ref{NS}),(\ref{TBC2})$, we get
\begin{equation}\label{E}\hspace{-0.2cm}
\partial_z u_2(t,x,z)=-\left( G_2\partial_x(f+g)(t,x)-\sigma_2^c\partial_x^3(f+g)(t,x)\right)(f+g-z)(t,x)+\partial_x\sigma_2(\Gamma(t,x)).
\end{equation}
Hence, $(\ref{D})$ and $(\ref{E})$ imply that
\begin{align}\label{fa}
\begin{split}
\partial_t f =\partial_x &\left[ f\left( (G_1-G_2\mu)\displaystyle{\frac{f^2}{3}} \partial_x f +G_2\mu \left(\displaystyle{\frac{f^2}{3}}+\displaystyle{\frac{fg}{2}}\right)\partial_x (f+g) -\mu\displaystyle{\frac{f}{2}}\partial_x \sigma(\Gamma)\right.\right.\\[5pt]
&\qquad\left.\left. -\sigma_1^c\displaystyle{\frac{f^2}{3}}\partial_x^3f- \sigma_2^c\mu\left(\displaystyle{\frac{f^2}{3}}+\displaystyle{\frac{fg}{2}}\right)\partial_x^3 (f+g)\right)\right],
\end{split}
\end{align}
where $f$, $g$ and $\Gamma$ depend on $(t,x)\in (0,\infty)\times (0,L)$.

Owing to \eqref{MN}, \eqref{C} and \eqref{E}, we obtain that
\begin{align}\begin{split}\label{G} \hspace{-0.3cm}
u_2(t,x,z) = &-\Big( G_2\partial_x(f+g)(t,x)-\sigma_2^c\partial_x^3(f+g)(t,x)\Big)\Big[ (f+g)(t,x)z-\displaystyle\frac{1}{2}z^2-\displaystyle\frac{1}{2}f^2(t,x)\\[5pt]
 &-fg(t,x)\Big] +\partial_x\sigma_2(\Gamma(t,x))[z-f(t,x)]\\[5pt]
&-\Big( G_1\partial_xf(t,x)+G_2\mu \partial_x g(t,x)- \sigma_2^c\mu\partial_x^3(f+g)(t,x)-\sigma_1^c\partial_x^3f(t,x)\Big)\displaystyle\frac{f^2(t,x)}{2} \\[5pt]
& -\mu\Big( G_2\partial_x(f+g)(t,x)-\sigma_2^c\partial_x^3(f+g)(t,x)\Big)(fg)(t,x) +\mu\partial_x\sigma_2(\Gamma(t,x))f(t,x).
\end{split}
\end{align}
Hence, in virtue of \eqref{MN}, \eqref{KBC}, the evolution equation for $g$ is determined by
\[\partial_t g(t,x)+\partial_x \left(\int_{f(t,x)}^{(f+g)(t,x)}u_2(t,x,z)\,dz\right)=0\]
and it follows from $(\ref{G})$ that
\begin{align}\label{ga}
\begin{split}
\partial_t g = &\partial_x \left[ g\left((G_1-G_2\mu)\displaystyle\frac{f^2}{2}\partial_x f + \left(G_2 \displaystyle\frac{g^2}{3}+G_2\mu\left(\displaystyle\frac{f^2}{2}+ fg\right)\right)\partial_x(f+g)\right.\right. \\[5pt]
&\; \left.\left. -\left(\mu f+\displaystyle\frac{g}{2}\right)\partial_x\sigma(\Gamma)-\sigma^c_1\displaystyle\frac{f^2}{2}\partial_x^3f - \left(\sigma_2^c \displaystyle\frac{g^2}{3}+\sigma_2^c\mu\left(\displaystyle\frac{f^2}{2}+fg\right)\right)\partial_x^3(f+g) \right)\right],
\end{split}
\end{align}
where $f$, $g$ and $\Gamma$ depend on $(t,x)\in (0,\infty)\times (0,L)$.

The equation for surfactant spreading on the layer $z=f+g$ is given by  the advection--transport equation \eqref{S}
\[\partial_t \Gamma + \partial_x(u_2 \Gamma - D \partial_x \Gamma) =0.\]
In view of $(\ref{G})$ we obtain the following equation for the evolution of $\Gamma$:
\begin{equation}\label{gammaa} \hspace{-0.3cm}
\begin{split}
&\partial_t \Gamma =\partial_x\left[\Gamma\left((G_1-G_2\mu)\displaystyle\frac{f^2}{2}\partial_x f+\left(G_2\displaystyle\frac{g^2}{2}+G_2\mu\left(\displaystyle\frac{f^2}{2}+fg\right)\right)\partial_x(f+g)\right.\right.\\[5pt]
&\;\left.\left. -\left(\mu f +g\right) \partial_x\sigma(\Gamma)- \sigma^c_1\displaystyle\frac{f^2}{2} \partial_x^3 f -\left(\sigma_2^c \displaystyle\frac{g^2}{2}+\sigma_2^c\mu\left(\displaystyle\frac{f^2}{2}+fg\right)\right)\partial_x ^3(f+g) \right)+D\partial_x\Gamma\right],
\end{split}
\end{equation}
where $f$, $g$ and $\Gamma$ depend on $(t,x)\in (0,\infty)\times (0,L)$.

Recalling \eqref{fa}, \eqref{ga} and \eqref{gammaa}, the via lubrication approximation derived system describing the evolution of a two--phase flow driven by gravitational forces only ($k=1$) or capillary effects only ($k=3$) is given by a \emph{strongly coupled, degenerated} system of second ($k=1$) or fourth order ($k=3$), respectively:
\begin{subequations}\label{system}
\begin{align} 
\nonumber
&\hspace{-0.3cm}\partial_t f =\partial_x \left[ f\left(\displaystyle{\frac{R_kf^2}{3}}\partial_x^k f + S_k\mu \left(\displaystyle{\frac{f^2}{3}} +\displaystyle{\frac{fg}{2}} \right) \partial_x^k(f+g) -\mu 	\displaystyle{\frac{f}{2}}\partial_x\sigma(\Gamma)\right)\right],\\[15pt]
\label{EE}
&\hspace{-0.3cm} \partial_t g = \partial_x \left[g\left( \displaystyle{\frac{R_kf^2}{2}} \partial_x^k f +S_k\left(  \displaystyle{\frac{g^2}{3}} +\mu \left( \displaystyle{\frac{f^2}{2}}+fg\right)\right)\partial_x^k (f+g) 
-\left(\mu f +\displaystyle{\frac{g}{2}} \right) \partial_x \sigma(\Gamma)\right)\right], \\[15pt]
\nonumber
&\hspace{-0.3cm}\partial_t \Gamma =\partial_x\left[\Gamma\left( \displaystyle{\frac{R_kf^2}{2}} \partial_x^k f +S_k\left(  \displaystyle{\frac{g^2}{2}}+\mu \left(\displaystyle{\frac{f^2}{2}}+fg\right)\right)\partial_x^k (f+g)
- \left(\mu f+g\right) \partial_x \sigma(\Gamma)\right)+ D\partial_x\Gamma\right]
\end{align}
for $t>0$ and $x\in (0,L)$ with initial data at $t=0$
\begin{equation} \label{ID}
f(0,\cdot)=f^0,\quad g(0,\cdot)=g^0, \quad \Gamma(0,\cdot)=\Gamma^0.
\end{equation}
Furthermore, we impose boundary conditions
\begin{align}\label{NB}
\begin{split}
&\partial_xf=\partial_xg=\partial_x\Gamma =0,\\[5pt]
&\partial_x^kf=\partial_x^kg=0.
\end{split}
\end{align}
at $x=0,L$.
\end{subequations}
The constants $R_k$ and $S_k$ are given by
\begin{alignat*}{2}
&R_1:= G_1-G_2\mu,\quad &&S_1:=G_2,\\
&R_3:= -\sigma_1^c,\quad &&S_3:=-\sigma_2^c.
\end{alignat*}
The degeneracy occurs in the equations for $f$ and $g$ in the sense that if $f$ or $g$ become zero in the first or second equation of \eqref{EE}, respectively, the highest order terms vanish. Hence, the system \eqref{EE} is not uniformly parabolic. It is said to be strongly coupled, since each equation contains highest order derivatives of all three unknowns. 
Observe that due to the special structure of \eqref{EE}, the boundary conditions \eqref{NB} guarantee that the mass of the each fluid and the mass of surfactant concentration is preserved.

\subsection{Energy Functional} If a  solution to \eqref{system} possesses sufficient regularity, there exists an \emph{energy functional} for the system of evolution equations, which provides not only a--priori estimates for $f,g,\Gamma$ and their spatial gradients but determines in particular  steady state solutions of \eqref{system}. The analysis of the asymptotic behavior of steady states will be a subject of the sequel sections.

\begin{lem}\label{EnF} The functionals
\begin{align*}
&\mathcal{E}_k(u):=\displaystyle{\int_0^L}\left\{ \frac{1}{2}\left( R_k f^2+S_k\mu(f+g)^2\right)+\mu\Phi(\Gamma)\right\}\,dx ,\qquad \mbox{for}\quad k=1,\\[10pt]
&\mathcal{E}_k(u):=\displaystyle{\int_0^L}\left\{ -\frac{1}{2}\left( R_k |\partial_xf|^2+S_k\mu|\partial_x(f+g)|^2\right)+\mu\Phi(\Gamma)\right\}\,dx ,\qquad \mbox{for}\quad k=3
\end{align*}
dissipate along sufficient regular  solutions   $u=(f,g,\Gamma)$ to \eqref{system}, where the function $\Phi$ be such that
\[\Phi^{\prime\prime}(s)s=-\sigma(s)\geq 0,\qquad s>0.\]
\end{lem}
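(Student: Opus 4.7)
The standard energy-dissipation calculation: differentiate $\mathcal{E}_k(u(t))$ in time, substitute the evolution equations \eqref{EE}, integrate by parts twice using the boundary conditions \eqref{NB}, and recognise the remaining integrand as the negative of a quadratic form associated with a positive semi-definite mobility matrix. The first step is to identify the variational derivatives $\Lambda_f, \Lambda_g, \Lambda_\Gamma$ of $\mathcal{E}_k$. For $k=1$ direct differentiation gives $\Lambda_f = R_1 f + S_1\mu(f+g)$, $\Lambda_g = S_1\mu(f+g)$, and $\Lambda_\Gamma = \mu\Phi'(\Gamma)$. For $k=3$, one integration by parts in $x$ (whose boundary contributions vanish because $\partial_x f = \partial_x g = 0$ at $\{0,L\}$) produces $\Lambda_f = R_3\partial_x^2 f + S_3\mu\partial_x^2(f+g)$ and $\Lambda_g = S_3\mu\partial_x^2(f+g)$, with $\Lambda_\Gamma$ unchanged. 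In either case,
\[
\frac{d}{dt}\mathcal{E}_k = \int_0^L \bigl(\Lambda_f\,\partial_t f + \Lambda_g\,\partial_t g + \Lambda_\Gamma\,\partial_t\Gamma\bigr)\,dx.
\]

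Next, I substitute the conservative form $\partial_t f = \partial_x J_f$, $\partial_t g = \partial_x J_g$, $\partial_t\Gamma = \partial_x(J_\Gamma + D\partial_x\Gamma)$ of \eqref{EE} and integrate by parts once more. The boundary contributions again vanish, since every term in the fluxes contains a factor of $\partial_x^k f$, $\partial_x^k(f+g)$, $\partial_x\sigma(\Gamma) = \sigma'(\Gamma)\partial_x\Gamma$, or $D\partial_x\Gamma$, each of which is zero at $\{0,L\}$ by \eqref{NB}. Thus
\[
\frac{d}{dt}\mathcal{E}_k = -\int_0^L \bigl[(\partial_x\Lambda_f)J_f + (\partial_x\Lambda_g)J_g + (\partial_x\Lambda_\Gamma)(J_\Gamma + D\partial_x\Gamma)\bigr]\,dx.
\]

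The heart of the proof is then an algebraic reorganisation: from the identities
\[
R_k\partial_x^k f = \partial_x\Lambda_f - \partial_x\Lambda_g,\quad S_k\mu\,\partial_x^k(f+g) = \partial_x\Lambda_g,\quad -\mu\,\partial_x\sigma(\Gamma) = \Gamma\,\partial_x\Lambda_\Gamma
\]
(the last using $\partial_x\Lambda_\Gamma = \mu\Phi''(\Gamma)\partial_x\Gamma$ together with the defining relation for $\Phi$), one can rewrite the three fluxes as $(J_f,\,J_g,\,J_\Gamma + D\partial_x\Gamma)^T = M(f,g,\Gamma)\,(\partial_x\Lambda_f,\,\partial_x\Lambda_g,\,\partial_x\Lambda_\Gamma)^T$ with the symmetric mobility matrix
\[
M = \begin{pmatrix}
\tfrac{f^3}{3} & \tfrac{f^2 g}{2} & \tfrac{f^2\Gamma}{2} \\
\tfrac{f^2 g}{2} & \tfrac{g^3}{3\mu} + f g^2 & \tfrac{g(g + 2\mu f)\Gamma}{2\mu} \\
\tfrac{f^2\Gamma}{2} & \tfrac{g(g + 2\mu f)\Gamma}{2\mu} & \bigl(f + \tfrac{g}{\mu}\bigr)\Gamma^2 + \tfrac{D}{\mu\Phi''(\Gamma)}
\end{pmatrix},
\]
which is the \emph{same} in both cases $k=1, 3$. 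The three leading principal minors of $M$ are polynomials in $f, g, \Gamma$ with manifestly non-negative coefficients whenever $f, g, \Gamma \ge 0$ and $\Phi''(\Gamma) \ge 0$, so Sylvester's criterion yields $M \succeq 0$ and hence $\tfrac{d}{dt}\mathcal{E}_k \le 0$.

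The principal difficulty I expect is precisely the algebraic step that exhibits the same symmetric $M$ in both the second-order ($k=1$) and fourth-order ($k=3$) setting. The specific numerical coefficients $\tfrac{1}{3}$ and $\tfrac{1}{2}$ appearing in \eqref{EE} are exactly what is needed to force $M_{ij} = M_{ji}$; this compatibility is a reflection of the Onsager reciprocity implicit in the underlying lubrication derivation, and verifying it amounts to a careful but mechanical bookkeeping. The two integrations by parts, the vanishing of boundary terms and the positive-semidefiniteness check are by comparison routine.
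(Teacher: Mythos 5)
Your argument is correct in substance and reaches the same dissipation inequality, but by a genuinely different route. The paper substitutes the fluxes, integrates by parts and then completes the square by hand, arriving at the explicit sum-of-squares identity \eqref{EF} with weights $f$, $\tfrac{1}{4}\mu^2 f$, $g\mu$, $\tfrac{g\mu}{4}$, $\mu\Phi''(\Gamma)D$; that concrete formula is then reused in Sections 3.2 and 4.2 to read off the steady states. You instead exhibit the Onsager structure $\tfrac{d}{dt}\mathcal{E}_k=-\int(\partial_x\Lambda)^{T}M(\partial_x\Lambda)\,dx$ with a single symmetric mobility matrix $M$ valid for both $k=1$ and $k=3$, and check positivity through principal minors. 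I verified your matrix: the entries are exactly as you state, and the minors come out to $M_1=\tfrac{f^3}{3}$, $M_2=\tfrac{f^3g^3}{9\mu}+\tfrac{f^4g^2}{12}$ and $\det M=\tfrac{\Gamma^2 f^3g^3(\mu f+g)}{36\mu^2}+\tfrac{D}{\mu\Phi''(\Gamma)}M_2$, all non-negative (note there are cancellations along the way, so "manifestly non-negative coefficients" is only true of the simplified expressions). Your formulation is more systematic and makes the common structure of the two cases transparent; the paper's explicit squares are less elegant but give the vanishing conditions $\partial_x\sigma(\Gamma)=\partial_x(f+g)=\partial_x((R_k+S_k\mu)f+S_k\mu g)=0$ at equilibria without any further argument.

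Two caveats. First, your identity $-\mu\,\partial_x\sigma(\Gamma)=\Gamma\,\partial_x\Lambda_\Gamma$ requires $\Phi''(s)s=-\sigma'(s)$, not the relation $\Phi''(s)s=-\sigma(s)$ as literally printed in the lemma; the former is what makes the cross terms match (and is what the paper's identity \eqref{EF} and Assumption S1) implicitly presuppose), so you have silently corrected a typo --- worth stating explicitly. Second, Sylvester's criterion with merely \emph{non-negative leading} principal minors does not imply positive semi-definiteness (consider $\mathrm{diag}(0,-1)$); you should either observe that for the strictly positive solutions produced by the local existence theory all three leading minors are strictly positive, so $M$ is positive definite, or check all principal minors. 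Relatedly, the entry $D/(\mu\Phi''(\Gamma))$ is undefined where $\Phi''(\Gamma)=0$; it is cleaner to keep the term $\mu\Phi''(\Gamma)D|\partial_x\Gamma|^2\ge 0$ outside the matrix and apply the minor computation to $M$ with that entry replaced by $(f+g/\mu)\Gamma^2$, whose determinant $\tfrac{\Gamma^2 f^3g^3(\mu f+g)}{36\mu^2}$ is still non-negative.
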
 

\begin{proof}
 Let $u=(f,g,\Gamma)$ be a solution to \eqref{system} satisfying the regularity 
 \begin{align*}
 (f,g,\Gamma) \in \left\{ \begin{array}{lcl} C^1((0,T); L_2(0,L,\R^3)),\qquad &\mbox{if}&\quad  k=1,\\[5pt] 
											C^1((0,T);H^1(0,L;\R^2))\cap C^1([0,T];L_2(0,L;\R)),\qquad &\mbox{if}&\quad  k=3,\end{array}\right.
 \end{align*}
 where $T\in (0,\infty]$ is the maximal time of existence. Recalling the special sturcture of \eqref{EE}, integration by parts yields \footnote{Note that the boundary terms vanish due to the boundary conditions \eqref{NB}.}
\allowdisplaybreaks
\begin{align*}
&\frac{d}{dt}\mathcal{E}_k(u)  = -\int_0^L \left\{R_k\partial_x^k f\left[ \displaystyle{\frac{R_kf^3}{3}}\partial_x^k f + S_k\mu \left(\displaystyle{\frac{f^3}{3}} +\displaystyle{\frac{f^2g}{2}} \right) \partial_x^k(f+g) -\mu 	\displaystyle{\frac{f^2}{2}}\partial_x\sigma(\Gamma)\right] \right\} \,dx \\[5pt]
& \quad -\int_0^L\left\{S_k\mu\partial_x^k(f+g) \left[ \displaystyle{\frac{R_kf^3}{3}}\partial_x^k f + S_k\mu \left(\displaystyle{\frac{f^3}{3}} +\displaystyle{\frac{f^2g}{2}} \right) \partial_x^k(f+g) -\mu 	\displaystyle{\frac{f^2}{2}}\partial_x\sigma(\Gamma)\right.\right.\\[5pt]
 &\quad\qquad \left.\left. + \displaystyle{\frac{R_kf^2g}{2}} \partial_x^k f +S_k\left(  \displaystyle{\frac{g^3}{3}} +\mu \left( \displaystyle{\frac{f^2g}{2}}+fg^2\right)\right)\partial_x^k (f+g) 
 -\left(\mu fg +\displaystyle{\frac{g^2}{2}} \right) \partial_x \sigma(\Gamma)\right]\right\}\,dx \\[5pt]
& \quad - \int_0^L\left\{\mu\Phi^{\prime\prime}(\Gamma)\partial_x \Gamma \left[\Gamma\left( \displaystyle{\frac{R_kf^2}{2}} \partial_x^k f +S_k\left(  \displaystyle{\frac{g^2}{2}}+\mu \left(\displaystyle{\frac{f^2}{2}}+fg\right)\right)\partial_x^k (f+g)
- \left(\mu f+g\right) \partial_x \sigma(\Gamma)\right)\right.\right.\\[5pt]
  &\quad\qquad  + \mu \Phi^{\prime\prime}(\Gamma)D\partial_x\Gamma\Bigg]  \Bigg\}\,dx
\end{align*}
for $k\in \{1,3\}$.
A tedious but  straight forward computation then yields
\begin{align}\label{EF}
\begin{split}
&\frac{d}{dt}\mathcal{E}_k(u)  = -\int_0^L \left\{ f\left[ \frac{f\partial_x^k((R_k+S_k\mu) f+S_k\mu g)}{\sqrt{3}}+\frac{\sqrt{3}}{2}\mu\left(S_k g \partial_x^k (f+g)-\partial_x \sigma(\Gamma)\right)\right]^2 \right. \\[5pt]
&\quad+\frac{1}{4}\mu^2 f\left[S_k g \partial_x^k (f+g)-\partial_x \sigma(\Gamma) \right]^2 +g\mu \left[\frac{S_k}{\sqrt{3}}g\partial_x^k (f+g)-\frac{\sqrt{3 }}{2}\partial_x \sigma(\Gamma)\right]^2\\[5pt]
&\quad  +\frac{ g \mu}{4}|\partial_x\sigma(\Gamma)|^2+\mu \Phi^{\prime\prime}(\Gamma)D|\partial_x \Gamma|^2 \Bigg\}\,dx.
\end{split}
\end{align}
\end{proof}

\section{Analysis of the gravity driven two--phase thin film flow with insoluble surfactant}\label{well posedness gravity}\label{Ch2}

In this section we prove a well--posedness and asymptotic stability result for the two--phase thin film equation with insoluble surfactant, where the motion of the fluids is driven by gravity only. The evolution of the thin--film flow is described by \eqref{system} if $k=1$, which is then a degenerated, strongly coupled parabolic system of second order in all three evolution equations.
Following the methods used in  \cite{EW1, EMM}, where local existence and asymptotic stability of strong solutions for systems modeling the evolution of a thin film with soluble surfactant and for systems describing a thin--film approximation of the two--phase Stokes problem, respectively, is shown, we prove analog results. 
In order to establish the well--posedness result, we assume the fluid with (strictly) higher density to be on the bottom ($\rho_1>\rho_2$)
and the initial data to satisfy  $(f^0,g^0,\Gamma^0)>0$.
We recall the gravity driven two--phase flow with insoluble surfactant \eqref{system} ($k=1$):
\begin{align} 
\nonumber
&\partial_t f =\partial_x \left[ f\left(\displaystyle{\frac{R_1f^2}{3}}\partial_x f + S_1\mu \left(\displaystyle{\frac{f^2}{3}} +\displaystyle{\frac{fg}{2}} \right) \partial_x(f+g) -\mu 	\displaystyle{\frac{f}{2}}\partial_x\sigma(\Gamma)\right)\right]\\[15pt]
\label{system1}
& \partial_t g = \partial_x \left[g\left( \displaystyle{\frac{R_1f^2}{2}} \partial_x f +S_1\left(  \displaystyle{\frac{g^2}{3}} +\mu \left( \displaystyle{\frac{f^2}{2}}+fg\right)\right)\partial_x (f+g) 
-\left(\mu f +\displaystyle{\frac{g}{2}} \right) \partial_x \sigma(\Gamma)\right)\right] \\[15pt]
\nonumber
&\partial_t \Gamma =\partial_x\left[\Gamma\left( \displaystyle{\frac{R_1f^2}{2}} \partial_x f +S_1\left(  \displaystyle{\frac{g^2}{2}}+\mu \left(\displaystyle{\frac{f^2}{2}}+fg\right)\right)\partial_x (f+g)
- \left(\mu f+g\right) \partial_x \sigma(\Gamma)\right)+ D\partial_x\Gamma\right]
\end{align}
for $t>0$ and $x\in (0,L)$, where 
\begin{align}\label{R1S1}
R_1=G_1-G_2\mu,\qquad S_1=G_2.
\end{align}
Furthermore, we assume Neumann boundary conditions to hold on the lateral boundary 
\begin{align*}
\begin{split}
&\partial_xf=\partial_xg=\partial_x\Gamma =0,\qquad \mbox{at}\quad x=0,L.
\end{split}
\end{align*}
We impose the following assumptions: 
\begin{itemize}
\item[G1) ] The density of the fluid on the bottom of the two--phase flow is higher than the density of the fluid on top, that is $\rho_1 > \rho_2$.
\end{itemize}
Assumption G1) in particular ensures, in view of \eqref{Gi} and \eqref{R1S1}, that
\begin{equation*}
R_1, S_1>0.
\end{equation*}
The surface tension, which depends on the surfactant concentration is assumed to be twice continuous differentiable and non--increasing
\begin{itemize}
\item[S1) ] $\sigma\in C^2(\R)$ and $-\sigma^\prime(s)\geq 0$ for all $s\geq 0$.
\end{itemize}
In order to study the well--posedness of the system of evolution equations \eqref{system1}, we need to find suitable spaces for solutions to work with and define 
\begin{align*}
&L_2:= L_2(0,L;\R^3),\\[5pt]
&H_N^2:= H_N^2(0,L;\R^3):= \{u\in H^2(0,L;\R^3)\mid \partial_x u(0)=\partial_x u(L) =0\}.
\end{align*}
The variable $u$ is to be seen as the triple $u=(f,g,\Gamma)$. Observe that we already incorporated the Neumann boundary condition in the space $H_N^2$.
For $\alpha\in[0,1]$ we define \footnote{We use the norm of $H^{2\alpha}(0,L;\R^3)\cap C([0,L];\R^3)$ to endow $U^\alpha$ with a metric.}
\[U^\alpha:= H_N^{2\alpha}(0,L;\R^3)\cap C([0,L];(0,\infty)^3),\]
where 
$$H_N^{2\alpha}:= H_N^{2\alpha}(0,L;\R^3):= \left\{ \begin{array}{lcl}\{u\in H^{2\alpha}(0,L;\R^3)\mid \partial_x u=0 \:\mbox{at}\: x=0,L\},\qquad &\mbox{if}&\; \alpha>\frac{3}{4},\\[5pt]
H^{2\alpha}(0,L;\R^3), &\mbox{if}&\; \alpha\in[0,\frac{3}{4}],\end{array}\right.$$
with $H^{2\alpha}(0,L;\R^3):= [L_2,H^2]_\alpha$ being the complex interpolation space between $H^2$ and $L_2$, called the \emph{Bessel potential space}. Let $\alpha>\frac{3}{4}$, then 
\begin{equation*}
H_N^{2\alpha} \subset C^1([0,L];\R^3)
\end{equation*}
and $U^\alpha\subset H_N^{2\alpha}$ is an open subset. For $u=(f,g,\Gamma)\in U^\alpha$ we define the diffusion matrix $a_G(u)$ to be
\begin{equation*}
\begin{pmatrix} (R_1+S_1\mu)\frac{f^3}{3}+G_2\mu\frac{f^2g}{2} & S_1\mu\left(\frac{f^3}{3}+\frac{f^2g}{2}\right) & -\mu\frac{f^2}{2}\sigma^\prime(\Gamma) \\
 S_1\frac{g^3}{3}+(R_1+S_1\mu)\frac{f^2g}{2}+S_1\mu fg^2 & G_2\frac{g^3}{3}+S_1\mu\left(\frac{f^2g}{2}+ fg^2\right) &-\left(\mu fg+\frac{g^2}{2}\right)\sigma^\prime(\Gamma) \\
\left( S_1\frac{g^2}{2}+(R_1+S_1\mu)\frac{f^2}{2}+G_2\mu fg\right)\Gamma  &\left(S_1\frac{g^2}{2}+S_1\mu \left(\frac{f^2}{2}+ fg\right)\right)\Gamma  &-\left(\mu f+g\right)\Gamma \sigma^\prime(\Gamma)+D \end{pmatrix}
\end{equation*}
and recast the problem $(\ref{system1})$ as an autonomous quasi--linear equation in the space $L^2$
\begin{equation}\label{A}
\partial_t u + A_G(u)u =0,\qquad t>0,\qquad u(0)=u^0,
\end{equation}
where the operator $A_G:U^\alpha\rightarrow \mathcal{L}(H_N^2,L_2)$ is given by
\begin{equation}\label{OA} A_G(u)w:=-\partial_x(a_G(u)\partial_xw),\qquad u\in U^\alpha,w\in H^2_N
\end{equation}
and $u^0=(f^0,g^0,\Gamma^0)$.

\subsection{Well--Posedness}
Studying the operator $A_G$ defined in $(\ref{OA})$, we prove that, assuming G1), S1) and $u^0\in U^\alpha$, there exists a unique, strictly positive solution on some time interval $[0,T)$, where $T\in (0,\infty]$ depends on the initial datum $u^0\in U^\alpha$. We claim that for fixed $u\in U^\alpha$, the linear operator $A_G(u)\in \mathcal{L}(H_N^2,L_2)$ is the negative generator of an analytic semigroup. Observe that the principal symbol of the linear operator $A_G(u)$, $u\in U^\alpha$, defined in \eqref{OA} is given by the matrix $a_G(u)$, which has positive eigenvalues in virtue of G1) and S1). 
It follows from \cite[Ex. 4.3.e)]{ANon} that $(A_G(u), B)$ is \emph{normally elliptic}, where $Bw=\partial_x w$ at $x=0,L$ for $w\in U^\alpha$. Taking into account that the coefficients of the matrix $a_G(u)$ are continuously differentiable and $A_G$ depends smoothly on its coefficients, \cite[Theorem 4.1]{ANon} implies that $-A_G(u)\in \mathcal{H}(H_N^2,L_2)$ and
\begin{equation}\label{AGH}
-A_G\in C^{1-}(U^\alpha,\mathcal{H}(H_N^2,L_2)).
\end{equation} 
With this, \cite[Theorem 12.1]{ANon} guarantees the following well--posedness result for \eqref{system1}:
\begin{thm}[Local Existence]\label{LE}
Let $\alpha\in (\frac{3}{4},1)$ and $u^0=(f^0,g^0,\Gamma^0)\in U^\alpha$. Assuming G1) and S1), the problem \eqref{A}
admits a unique positive strong solution
$$u=(f,g,\Gamma)\in C([0,T),U^\alpha)\cap C^\alpha([0,T),L_2)\cap C^1((0,T),L_2)\cap C((0,T),H_N^2)$$
with maximal time of existence $T\in(0,\infty]$. Moreover, $u$ depends continuously on the initial datum $u^0$ with respect to the topology of $U^\alpha$.
\end{thm}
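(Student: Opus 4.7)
The plan is to fit the system \eqref{A} into Amann's framework for quasi--linear parabolic problems \cite[Theorem 12.1]{ANon} by verifying the two standing hypotheses there: that $-A_G(u)$ generates an analytic semigroup for each frozen $u\in U^\alpha$, and that the operator--valued map $u\mapsto -A_G(u)$ is locally Lipschitz from $U^\alpha$ into $\mathcal{H}(H_N^2,L_2)$. The choice of the phase space $U^\alpha$ with $\alpha>3/4$ is crucial: the embedding $U^\alpha\hookrightarrow C^1([0,L];\R^3)$ lets the matrix coefficients act pointwise, and the restriction to strictly positive triples avoids the degeneracy of \eqref{EE} right from the start.

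The heart of the argument, and the step I expect to be the main obstacle, is the pointwise verification that the principal symbol matrix $a_G(u)(x)$ is normally elliptic for every $x\in[0,L]$, i.e.\ that its spectrum lies in an open sector around the positive real axis. Under G1) and S1) one has $R_1,S_1>0$ and $-\sigma'(\Gamma)\geq 0$, and on each $u\in U^\alpha$ the quantities $f,g,\Gamma$ range over a compact subset of $(0,\infty)$. Since $a_G(u)$ is non--symmetric, a convenient route is to check the Routh--Hurwitz conditions (positivity of trace, of determinant, and of the second elementary symmetric function of the eigenvalues); an essentially equivalent alternative is to mimic the algebraic rearrangement already carried out in the energy identity \eqref{EF}, which exhibits $\langle a_G(u)\xi,\xi\rangle$ as a sum of squares in $\xi_1,\xi_2$ plus a non--negative surfactant contribution plus the strictly positive diffusion term $D\xi_3^2$, and so yields coercivity of the symmetric part and therefore of $a_G(u)$ itself on the compact range of $u$.

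Once the pointwise symbol estimate is in hand, the Neumann boundary setting falls precisely into the situation treated in \cite[Ex.~4.3.e)]{ANon}, so $(A_G(u),B)$ with $Bw=\partial_xw|_{\{0,L\}}$ is normally elliptic and \cite[Theorem 4.1]{ANon} supplies $-A_G(u)\in\mathcal{H}(H_N^2,L_2)$. The Lipschitz dependence $-A_G\in C^{1-}(U^\alpha,\mathcal{H}(H_N^2,L_2))$ recorded in \eqref{AGH} is then a calculus check: the entries of $a_G(u)$ are polynomials in $(f,g,\Gamma)$, possibly multiplied by $\sigma'(\Gamma)$, and by S1) $\sigma\in C^2(\R)$, so $u\mapsto a_G(u)$ is locally Lipschitz from $U^\alpha$ into $C^1([0,L];\R^{3\times 3})$; pointwise multiplication by such coefficients gives a locally Lipschitz map into $\mathcal{L}(H_N^2,L_2)$.

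With both prerequisites verified, \cite[Theorem 12.1]{ANon} applies directly and yields, for every $u^0\in U^\alpha$, a unique maximal strong solution
\[u=(f,g,\Gamma)\in C([0,T),U^\alpha)\cap C^\alpha([0,T),L_2)\cap C^1((0,T),L_2)\cap C((0,T),H_N^2)\]
together with the continuous dependence on $u^0$ in the topology of $U^\alpha$. Positivity of $u$ on $[0,T)$ is automatic from the construction, since $U^\alpha\subset C([0,L];(0,\infty)^3)$; that is precisely why $U^\alpha$ was set up as an open subset of strictly positive triples, so no separate maximum--principle argument is needed at this stage.
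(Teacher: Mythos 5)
Your proposal is correct and follows essentially the same route as the paper: reduce \eqref{system1} to the abstract quasi--linear problem \eqref{A}, verify normal ellipticity of $(A_G(u),B)$ via the positivity of the spectrum of $a_G(u)$ under G1) and S1) together with \cite[Ex.~4.3.e)]{ANon} and \cite[Theorem 4.1]{ANon}, and conclude with \cite[Theorem 12.1]{ANon}. You in fact supply more detail than the paper on the eigenvalue check (Routh--Hurwitz, or coercivity of a suitably weighted symmetrization as in the energy identity \eqref{EF} --- note it is a weighted quadratic form, not $\langle a_G(u)\xi,\xi\rangle$ itself, but a positive diagonal congruence preserves the location of the spectrum in the right half plane), where the paper simply asserts positivity of the eigenvalues.
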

Remark that Assumption G1) is crucial in order to obtain the well--posedness result. Hence, studying local strong solutions of \eqref{system1}, we need to exclude the case when $\rho_1=\rho_2$, that is, when both fluids have the same density but may differ in their viscous behavior. If $\rho_1=\rho_2$, then $R_1=0$ (cf. \eqref{Gi}) and it is easy to see that the matrix $a_G(u)$ has a zero eigenvalue. In this case we can no longer apply the theory in \cite{ANon}. 

\subsection{Asymptotic Stability}\label{2AS}
We show that the only steady states of \eqref{system1} are of the form where the films are flat and the surfactant concentration is uniformly distributed. Under the assumption that the surface tension is strictly decreasing, we obtain that that the steady states are asymptotically stable.
Similar as in \cite{EW2,EM13,EMM}, we use the energy functional \eqref{EnF}, which provides together with Assumption G1) that the set of steady states is determined by constants if the surface tension is strictly decreasing. Moreover, we show that if $u_*>0$ is a steady state, then it is asymptotically stable.
Considering system \eqref{system1}, it is clear that $u_*=(f_*,g_*,\Gamma_*)$, where $f_*,g_*$ and $\Gamma_*$ are positive constants, is an equilibrium. In order to determine all steady state solutions of \eqref{system1}, observe that a solution $u=(f,g,\Gamma)$ to \eqref{system1} as given by Theorem \ref{LE} satisfies \eqref{EF}.
Note that all terms on the right--hand side of the energy equality \eqref{EF} are non--positive. Hence, if $u=(f,g,\Gamma)$ is an equilibrium to \eqref{system1}, every single term on the right--hand side has to vanish, which implies that $\partial_x \sigma(\Gamma)= \partial_x(f+g)=\partial_x((R_1+S_1\mu) f + S_1\mu g)=0$. If $\sigma$ is strictly decreasing, we deduce that $f,g$ and $\Gamma$ are constant, in view of Assumption G1). 

\begin{cor} Suppose that $\sigma \in C^2(\R)$ is strictly decreasing and Assumption G1) is satisfied. Then, the only positive steady states to \eqref{system1} are of the form $(f_*,g_*,\Gamma_*)\in U^\alpha$ with constants $f_*,g_*,\Gamma_*>0$.
\end{cor}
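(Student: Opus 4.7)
The plan is to read off the steady-state conditions from the dissipation identity \eqref{EF} for $k=1$ applied to a stationary solution. Let $u_* = (f_*, g_*, \Gamma_*) \in U^\alpha$ be a positive steady state of \eqref{system1}. Since $u_*$ does not depend on $t$, the left-hand side of \eqref{EF} vanishes. Each summand inside the integrand of \eqref{EF} is non-negative: the first three are products of a square with $f$, $\mu^2 f$, or $g\mu$, which are all strictly positive on $[0,L]$ since $u_* \in U^\alpha$ takes values in $(0,\infty)^3$; the fourth is $\tfrac{g\mu}{4}|\partial_x\sigma(\Gamma)|^2$; the last is $\mu D \Phi''(\Gamma)|\partial_x\Gamma|^2$, which is non-negative because the defining relation $\Phi''(s)s = -\sigma(s)$ together with $\sigma \leq 0$ (strict decrease plus a base level consistent with S1) gives $\Phi''(\Gamma_*) \geq 0$ on $(0,\infty)$. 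Therefore each summand must vanish pointwise on $(0,L)$.

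From the fourth summand and $g_* > 0$ we obtain $\partial_x \sigma(\Gamma_*) = 0$, i.e., $\sigma'(\Gamma_*)\partial_x \Gamma_* = 0$; since $\sigma$ is strictly decreasing, $\sigma'(\Gamma_*) \neq 0$, hence $\partial_x \Gamma_* = 0$, so $\Gamma_*$ is constant. Feeding this into the third summand, the bracket reduces to $\tfrac{S_1}{\sqrt 3} g_* \partial_x(f_* + g_*)$, and since $g_*, S_1 > 0$ (using G1 for $S_1 = G_2 > 0$), we deduce $\partial_x(f_* + g_*) = 0$. Substituting both identities into the first summand, the inner bracket collapses to $\tfrac{f_*}{\sqrt 3}\partial_x\bigl((R_1+S_1\mu) f_* + S_1\mu\, g_*\bigr)$, whence $\partial_x\bigl((R_1+S_1\mu) f_* + S_1\mu\, g_*\bigr) = 0$ because $f_* > 0$.

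Combining the last two identities gives, using $\partial_x g_* = -\partial_x f_*$,
\begin{equation*}
0 = \partial_x\bigl((R_1+S_1\mu) f_* + S_1\mu\, g_*\bigr) = R_1 \partial_x f_*.
\end{equation*}
By Assumption G1), $R_1 = G_1 - G_2\mu = (\rho_1 - \rho_2\mu)G L/(\mu_1\tau_0) > 0$ (noting $\mu = \mu_2/\mu_1$ and $\rho_1 > \rho_2$ so $\rho_1 \mu_1 > \rho_2 \mu_2$ after unpacking; more directly, $R_1>0$ is recorded just after G1). Hence $\partial_x f_* = 0$ and then $\partial_x g_* = 0$, proving that $f_*, g_*, \Gamma_*$ are all constant.

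The argument is essentially bookkeeping once \eqref{EF} is available; the only mildly delicate point is ordering the extractions so that one uses $\partial_x\sigma(\Gamma_*) = 0$ before exploiting the $g_* \partial_x(f_*+g_*)$ square, and uses both before isolating the remaining $f_*$-gradient term. The strict positivity of $f_*$ and $g_*$ built into $U^\alpha$, together with $R_1, S_1 > 0$ from G1 and $\sigma' < 0$ from the strict decrease hypothesis, is what forces constancy; without any of these (e.g. if $\rho_1 = \rho_2$ so $R_1 = 0$), the final step would fail and additional steady state families could appear.
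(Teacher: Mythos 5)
Your argument is correct and is essentially the paper's own proof: the author likewise reads off from the dissipation identity \eqref{EF} that every (non-negative) summand must vanish at a steady state, extracts $\partial_x\sigma(\Gamma)=\partial_x(f+g)=\partial_x\bigl((R_1+S_1\mu)f+S_1\mu g\bigr)=0$, and concludes constancy from $R_1,S_1>0$ (Assumption G1)) and the strict decrease of $\sigma$. The only nitpick is the step ``$\sigma$ strictly decreasing $\Rightarrow \sigma'(\Gamma_*)\neq 0$'', which is not literally valid for a $C^2$ function (the derivative may vanish at isolated points); to pass from $\partial_x\sigma(\Gamma_*)=0$ to $\partial_x\Gamma_*=0$ one should instead note that $\sigma\circ\Gamma_*$ is constant and $\sigma$ is injective.
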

In order to study the stability properties of these equilibria, we observe first, by a simple computation, that the mass of each fluid and the mass of surfactant is preserved by the evolution of the system, which is a consequence of the Neumann boundary conditions. 
\begin{lem}[Conservation of mass]\label{conservation}
Let $u=(f,g,\Gamma)$ be a solution to \eqref{system1} as found in Theorem \ref{LE}. Then, the mass of $u$ is preserved with time, that is, 
\begin{equation*}
\frac{d}{dt}\displaystyle{\int_0^L} f(t,x)\,dx =0\qquad \mbox{and}\qquad \frac{d}{dt} \displaystyle{\int_0^L} g(t,x)\,dx =0\qquad \mbox{and}\qquad \frac{d}{dt}\displaystyle{\int_0^L} \Gamma(t,x)\,dx =0
\end{equation*}
on $(0,T)$.
\end{lem}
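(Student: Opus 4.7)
The plan is to exploit the divergence structure of each equation in \eqref{system1}. Every equation has the form $\partial_t v = \partial_x J_v$ for some flux $J_v$, so integrating over $(0,L)$ and using the fundamental theorem of calculus gives
\[
\frac{d}{dt}\int_0^L v(t,x)\,dx = J_v(t,L) - J_v(t,0),
\]
and it suffices to show that each flux vanishes at $x=0$ and $x=L$. The regularity guaranteed by Theorem \ref{LE}, namely $u\in C^1((0,T),L_2)\cap C((0,T),H_N^2)$, is enough to differentiate under the integral sign and to evaluate $\partial_x f$, $\partial_x g$, $\partial_x\Gamma$ at the boundary via the continuous embedding $H_N^2\hookrightarrow C^1([0,L];\R^3)$.

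Concretely, I would write out the three fluxes
\begin{align*}
J_f &= f\left(\tfrac{R_1f^2}{3}\partial_x f + S_1\mu\left(\tfrac{f^2}{3}+\tfrac{fg}{2}\right)\partial_x(f+g) - \mu\tfrac{f}{2}\partial_x\sigma(\Gamma)\right),\\
J_g &= g\left(\tfrac{R_1f^2}{2}\partial_x f + S_1\left(\tfrac{g^2}{3}+\mu(\tfrac{f^2}{2}+fg)\right)\partial_x(f+g) - (\mu f+\tfrac{g}{2})\partial_x\sigma(\Gamma)\right),\\
J_\Gamma &= \Gamma\left(\tfrac{R_1f^2}{2}\partial_x f + S_1\left(\tfrac{g^2}{2}+\mu(\tfrac{f^2}{2}+fg)\right)\partial_x(f+g) - (\mu f+g)\partial_x\sigma(\Gamma)\right) + D\partial_x\Gamma,
\end{align*}
and observe that each $J_v$ is a sum of terms, each of which contains at least one factor from $\{\partial_x f,\,\partial_x g,\,\partial_x\Gamma\}$ (using $\partial_x\sigma(\Gamma)=\sigma'(\Gamma)\partial_x\Gamma$ for the Marangoni contribution, and $\partial_x(f+g)=\partial_x f+\partial_x g$ for the gravitational contribution). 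The Neumann boundary conditions $\partial_x f=\partial_x g=\partial_x\Gamma=0$ at $x=0,L$, built into the definition of $H_N^2$, therefore force $J_f$, $J_g$, $J_\Gamma$ to vanish at both endpoints.

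There is no real obstacle here; the statement follows directly from the conservative form of \eqref{system1} together with the boundary conditions. The only mild point worth mentioning is continuity in time of the mass: since $u\in C([0,T),U^\alpha)$ and in particular $f,g,\Gamma\in C([0,T),L_1(0,L))$, the identity $\frac{d}{dt}\int_0^L v\,dx=0$ on $(0,T)$ upgrades to constancy of $\int_0^L v(t,x)\,dx$ for all $t\in[0,T)$, which is the conservation asserted in the lemma.
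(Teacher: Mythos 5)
Your proof is correct and is exactly the argument the paper has in mind: the lemma is stated there as "a simple computation... a consequence of the Neumann boundary conditions," i.e., each equation is in divergence form and every term of each flux carries a factor of $\partial_x f$, $\partial_x g$, or $\partial_x\Gamma$, all of which vanish at $x=0,L$. Your write-up simply makes the paper's one-line justification explicit.
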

The remainder of this section is dedicated to prove that, assuming the averaged initial surfactant concentration to be small, there exists for every initial data being close enough to the steady state a global positive strong solution to \eqref{system1} tending exponentially to the constant steady state.

Set $u_*=(f_*,g_*,\Gamma_*)$ with $f_*,g_*,\Gamma_*$ being positive constants and denote by 
\[\langle h \rangle:= \frac{1}{L}\int_0^Lh(x)\,dx  \]
the  average (with respect to space) of a function $h$.
Let $u=(f,g,\Gamma)$ be the unique strong solution to \eqref{system1} corresponding to the initial data $u^0=(f^0,g^0,\Gamma^0)\in U^\alpha$, satisfying $\langle f^0 \rangle = f_*, \langle g^0 \rangle =g_*$ and $\langle \Gamma^0 \rangle =\Gamma_* $. 
In order to study the stability property of the equilibrium $u_*$, we follow the ideas used in \cite{EW1, EMM} and eliminate the non--zero constant functions from the space we work in by introducing the projection $P\in \mathcal{L}(L_2)\cap\mathcal{L}(H_N^2)$, defined by 
$$Pu := u- \langle u \rangle= \left( f-\frac{1}{L}\int_0^Lf(x)\,dx, g-\frac{1}{L}\int_0^Lg(x)\,dx,\Gamma-\frac{1}{L}\int_0^L\Gamma(x)\,dx\right).$$
Clearly, $P$ defines a projection as
$$P^2u=PPu = P(u-\langle u \rangle)= Pu.$$
By means of the continuous projection we can decompose the spaces
\begin{align*} L_2 &= PL_2 \oplus (1-P)L_2, \\[5pt]
H_N^2 &= PH_N^2 \oplus (1-P)H_N^2
\end{align*}
into direct sums 
,
where $PL_2$, $PH_N^2$ contain the non--constant functions and the zero function in $L_2,H_N^2$ and $(1-P)L_2,(1-P)H_N^2$ contain the constant functions in $L_2,H_N^2$, respectively. Due to mass conservation (cf. Lemma \ref{conservation}) and continuity in $t=0$, a solution $u$ of \eqref{system1}, which satisfies initially $(1-P)u(0)=u_*$ fulfills $(1-P)u(t)=u_*$ as long as the solution exists.  Hence, we can decompose the solution $u$ with respect to the orthogonal sums:
$$u(t)= z(t)+u_* \in PL_2 \oplus (1-P)L_2,\qquad t\geq 0,$$
with $z(t)=Pu(t)$.
By $u$ being the corresponding solution to the initial data $u^0\in U^\alpha$, the function $z=u-u_*$ is a solution of 
\[
\partial_t z + A_G(z+u_*)z=0,\qquad z(0)=u^0-u_*.
\]
The stability property for $u_*$ is then equivalent to the one for the stationary solution $z=0$ of 
\begin{equation}\label{Z}
\partial_t z + A_G^*z= (A_G^*-A_G(z+u_*)\big|_{PH_N^2})z =:F(z),
\end{equation}
with $A_G^* w := A_G(u_*)w$ for $w\in PH_N^2.$
Due to the Neumann boundary conditions, both operators, $A_G^*$ and $[z\rightarrow A_G(z+u_*)z]$, map $PH_N^2$ into $PL_2$. Indeed, if $z\in PH_2$, then \[(1-P)A_G^*z  =\langle A_G^* z \rangle = -\frac{1}{L}\displaystyle{\int_0^L} \partial_x(a_G(u_*)\partial_x z)\,dx = 0\]
and
\[(1-P)A_G(z+u_*)z=\langle A_G(z+u_*)\rangle = -\frac{1}{L}\displaystyle{\int_0^L} \partial_x(a(z+u_*)\partial_x z)\,dx =0.\]
Note that, in view of $PH_N^2$ being continuously embedded into $PL_2$, the set $PH_N^2$ is an open neighborhood of zero in $PL_2$. Furthermore,
\begin{equation}\label{FZ} F\in C^1(PH_N^2,PL_2)\quad \mbox{with}\quad F(0)=F^\prime(0)=0,
\end{equation} where $F^\prime$ denotes the Fr\'echet derivative of $F$.

\begin{lem}\label{I} The operator $A_G^*:PH_N^2\subset PL_2 \rightarrow PL_2$ belongs to $\mathcal{H}(PH_N^2,PL_2)$, that is, $-A_G^*$ is the generator of an analytic semigroup on $PL_2$.
\end{lem}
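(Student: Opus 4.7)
The plan is to deduce the lemma directly from the known result \eqref{AGH} evaluated at the constant state $u_*$. Because $u_* = (f_*,g_*,\Gamma_*)$ consists of strictly positive constants, the coefficient matrix $a_G(u_*)$ is constant in $x$ and its entries are polynomials in $f_*,g_*,\Gamma_*,\sigma'(\Gamma_*)$; assumptions G1) and S1) ensure that its spectrum lies in the right half-plane (this is the same normal ellipticity observation used in \cite[Ex.~4.3.e)]{ANon}). Hence $A_G^*$ is a constant-coefficient operator and \eqref{AGH} applied at $u_*\in U^\alpha$ gives $-A_G^*\in\mathcal{H}(H_N^2,L_2)$. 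Thus the issue is not generation itself on $L_2$ but rather the passage to the closed subspace $PL_2$ with domain $PH_N^2$.

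First I would verify that the projection $P$ commutes with $A_G^*$ on $H_N^2$. This rests on two elementary facts. On the one hand, constants are annihilated by $A_G^*$ since $A_G^* w = -\partial_x(a_G(u_*)\partial_x w)$ and $\partial_x$ kills constants, so $A_G^*(1-P)v=0$ for every $v\in H_N^2$. On the other hand, the computation carried out in the paragraph preceding the lemma shows that $\langle A_G^* v\rangle = 0$ by integrating by parts and using the Neumann condition, hence $(1-P)A_G^* v=0$. Combining these two identities yields $A_G^* = A_G^* P = P A_G^*$ on $H_N^2$, so that $A_G^*$ leaves both the constant subspace $(1-P)H_N^2$ and its complement $PH_N^2$ invariant, and its restriction to $PH_N^2$ takes values in $PL_2$.

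Next I would transfer the sectorial and resolvent estimates from $L_2$ to $PL_2$. Since $P\in\mathcal{L}(L_2)\cap\mathcal{L}(H_N^2)$ is a bounded projection commuting with $A_G^*$, it commutes with $(\lambda+A_G^*)^{-1}$ for every $\lambda$ in the resolvent set, so $PL_2$ is invariant under the resolvent. Consequently the part of $A_G^*$ in $PL_2$ has the same resolvent set and the same sectorial resolvent bounds as $A_G^*$ itself, just restricted to the closed subspace. This is exactly the definition of $-A_G^*|_{PH_N^2}\in\mathcal{H}(PH_N^2,PL_2)$, and the analytic semigroup on $PL_2$ is obtained as the restriction of the semigroup generated by $-A_G^*$ on $L_2$.

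There is no real obstacle here: the lemma is essentially a restriction-to-an-invariant-subspace statement for a generator already produced by the Amann machinery. The only step requiring care is the commutation $PA_G^*=A_G^*P$ on $H_N^2$, and even that is immediate once one notes separately that constants are in the kernel and that the range lies in $PL_2$. Everything else is the standard functional-analytic fact that sectorial operators restrict to sectorial operators on closed invariant subspaces.
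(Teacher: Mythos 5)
Your proposal is correct and follows essentially the same route as the paper: both arguments rest on the two observations that $A_G^*$ annihilates constants and that $\langle A_G^* w\rangle=0$ by the Neumann conditions, so that $A_G^*$ is block--diagonal with respect to $PL_2\oplus(1-P)L_2$. The only cosmetic difference is that the paper concludes by citing Amann's matrix--generator theorem \cite[Theorem I.1.6.3]{ALin}, whereas you carry out the equivalent restriction--to--an--invariant--subspace argument for the resolvent by hand.
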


\begin{proof} We already know from \eqref{AGH} that $-A_G\in C^{1-}(U^\alpha,\mathcal{H}(H_N^2,L_2))$, hence $-A_G(u_*)\in\mathcal{H}(H_N^2,L_2)$. By means of the orthogonal projection $P$ we can represent $-A_G(u_*)$ as a matrix operator
\[-A_G(u_*)= \begin{pmatrix} -A_G(u_*)\big|_{PH_N^2}& 0 \\0 & 0 \end{pmatrix}\in \mathcal{H}(PH_N^2\oplus(1-P)H_N^2,PL_2\oplus(1-P)L_2).\]
Because $A_G(u_*)(1-P)w=0$, the second column of the matrix has zero entries. Moreover $(1-P)A_G(u_*)w=\langle A_G(u_*)w \rangle=-\frac{1}{L}\int_0^L\partial_x(a_G(u_*)\partial_xw)=0$ for $w\in H_N^2$, which justifies the zero in the first entry of the second row. It follows from \cite[Theorem I.1.6.3]{ALin} that
\[-A_G(u_*)\big|_{PH_N^2} \in \mathcal{H}(PH_N^2,PL_2).\]
\end{proof}
In order to prove asymptotic stability for the equilibrium $z=0$ of $(\ref{Z})$, we apply the \emph{principle of linearized stability} (cf. \cite[9.1.1]{Lun}). For this purpose we state the following lemma:
\begin{lem}\label{K} Suppose $\sigma\in C^2(\R)$ is strictly decreasing and Assumption G1) is satisfied. Then there are numbers $\e, \omega_0>0$ such that the spectrum $\spec (-A_G^*)$ of $-A_G^*$ is contained in the half plane $[\re z \leq -\omega_0]$ provided that $0\leq\Gamma_*<\e$.
\end{lem}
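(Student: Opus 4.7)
The plan is to exploit the constant-coefficient structure. Since $u_* = (f_*, g_*, \Gamma_*)$ is constant in $x$, the matrix $M(\Gamma_*) := a_G(u_*)$ is a constant $3\times 3$ matrix and $A_G^* w = -M(\Gamma_*)\,\partial_x^2 w$ for $w \in PH_N^2$. Expanding elements of $PH_N^2$ in the Neumann cosine basis $\{\cos(k\pi x/L) : k \ge 1\}$ (the $k=0$ mode is removed by $P$), the eigenvalue problem for $A_G^*$ diagonalises in $x$, giving
$$\spec(A_G^*) \;=\; \bigl\{ (k\pi/L)^{2}\,\lambda : k \ge 1,\ \lambda \in \spec M(\Gamma_*)\bigr\}.$$
Consequently it suffices to produce $\varepsilon, \omega_1 > 0$ such that $\re \lambda \ge \omega_1$ for every $\lambda \in \spec M(\Gamma_*)$ and every $\Gamma_* \in [0, \varepsilon)$; then $\omega_0 := (\pi/L)^{2} \omega_1$ answers the claim.

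Next I would study the limit $\Gamma_* = 0$. A glance at $a_G(u)$ shows that the $(3,1)$ and $(3,2)$ entries carry a prefactor of $\Gamma$, so $M_0 := M(0)$ is block upper-triangular with scalar block $D > 0$ and upper-left $2\times 2$ block $\tilde M$ formed from the first two rows and columns of $a_G(u_*)$. Thus $\spec M_0 = \spec \tilde M \cup \{D\}$. Inspection yields the key column-difference identities
$$\tilde M_{11} - \tilde M_{12} \;=\; R_1\frac{f_*^3}{3}, \qquad \tilde M_{21} - \tilde M_{22} \;=\; R_1\frac{f_*^2 g_*}{2},$$
which, substituted into $\det \tilde M = \tilde M_{11}\tilde M_{22} - \tilde M_{12}\tilde M_{21}$, collapse the algebra to
$$\det \tilde M \;=\; R_1 S_1\, f_*^3 g_*^2 \Bigl(\frac{g_*}{9} + \frac{\mu f_*}{12}\Bigr) \;>\; 0,$$
using $R_1, S_1 > 0$ from Assumption G1). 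Since $\mathrm{tr}\,\tilde M$ is manifestly positive term by term, both eigenvalues of $\tilde M$ have strictly positive real parts, so $\spec M_0 \subset \{\re z > 0\}$.

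Finally, since $\sigma \in C^2$, the map $\Gamma_* \mapsto M(\Gamma_*)$ is continuous, and the roots of the characteristic polynomial of $M(\Gamma_*)$ depend continuously on its entries. Hence there exist $\varepsilon > 0$ and $\omega_1 > 0$ with $\spec M(\Gamma_*) \subset \{\re z \ge \omega_1\}$ for all $\Gamma_* \in [0, \varepsilon)$. Combined with the Fourier decomposition this gives $\spec(-A_G^*) \subset \{\re z \le -\omega_0\}$ with $\omega_0 = (\pi/L)^{2} \omega_1$, completing the argument. The main technical hurdle is the determinant computation for $\tilde M$: the row-difference trick above makes it short, but the raw algebra is otherwise quite opaque.
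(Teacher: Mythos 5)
Your proof is correct, and it takes a genuinely different route from the paper. The paper never computes the spectrum directly: it runs an energy (Lyapunov) argument on the linearized evolution, rewriting the system in the variables $(f+g,f,\Gamma)$ with a weighted norm $\frac{S_1\mu}{R_1}\|f+g\|_2^2+\|f\|_2^2+z\|\Gamma\|_2^2$, symmetrizing the resulting diffusion matrix into $b_G^z(u_*)$, checking that this symmetric matrix is positive definite at $\Gamma_*=0$ for a sufficiently large weight $z$, perturbing to small $\Gamma_*$, and then invoking Poincar\'e's inequality to get exponential decay of $e^{-tA_G^*}$, from which the spectral localization is read off. You instead exploit that $A_G^*$ has constant coefficients, diagonalize it exactly in the Neumann cosine basis, and reduce the claim to showing that the symbol matrix $M(\Gamma_*)=a_G(u_*)$ has spectrum in $[\re z\geq\omega_1]$; the triangular structure at $\Gamma_*=0$ then leaves only a $2\times2$ trace--determinant check (your identities $\tilde M_{11}-\tilde M_{12}=R_1f_*^3/3$, $\tilde M_{21}-\tilde M_{22}=R_1f_*^2g_*/2$ and the resulting $\det\tilde M=R_1S_1f_*^3g_*^2(g_*/9+\mu f_*/12)$ are all correct), followed by continuity of the roots in $\Gamma_*$. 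Your approach is more elementary and gives the spectrum explicitly; it also imposes a weaker requirement ($\re\lambda>0$ for the eigenvalues of $M$ rather than positive definiteness of a symmetrization), and it dispenses with both the choice of the weight $z$ and the Poincar\'e step. What the paper's energy method buys is the decay estimate for the semigroup itself in a form that survives when the coefficients are not constant, and it produces the dissipation inequality directly rather than deducing it from the spectrum via analyticity of the semigroup; for the constant-coefficient operator at hand, however, both arguments are complete, and yours is arguably cleaner. (Two cosmetic points: what you call ``column-difference identities'' are differences of entries within a row, and you should note explicitly that the spectrum of $A_G^*$ consists only of eigenvalues --- which follows from the compact embedding $PH_N^2\hookrightarrow PL_2$ together with Lemma \ref{I} --- so that the Fourier-mode computation indeed exhausts $\spec(A_G^*)$.)
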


\begin{proof} Take $w^0 = (f^0,g^0,\Gamma^0)\in PL_2$ arbitrary and let $w(t):= e^{-tA_G^*}w^0,\:t\geq 0$, be the unique strong solution in $PL_2$ to the linearized problem 
\begin{equation}\label{LP}\partial_t w +A_G^* w =0,\qquad t>0,\qquad w(0)=w^0.\end{equation}
By definition of $A_G^*=A_G(u_*)$, the function $w=(f,g,\Gamma)\in PH^2_N$ satisfies 
\[\partial_t \begin{pmatrix}\frac{S_1\mu}{R_1}( f+g )\\ f \\ z \Gamma \end{pmatrix}-\partial_x \left( \tilde{a}_G^z(u_*)\partial_x \begin{pmatrix} f+g \\ f \\ \Gamma \end{pmatrix}\right)=0,\]
where $z>0$ is a constant and the matrix $\tilde{a}_G^z(u_*)$ is given by
\[\begin{pmatrix} d_1 & S_1\mu \left(\frac{f_*^3}{3}+\frac{f_*^2g_*}{2}\right)  & -\frac{S_1\mu}{R_1}\left(\mu \frac{f^2}{2}+\mu f_*g_*+\frac{g_*^2}{2}\right)\sigma^\prime(\Gamma_*)\\
 S_1\mu\left(\frac{f_*^3}{3}+\frac{f_*^2g_*}{2}\right) & R_1\frac{f_*^3}{3} & -\mu\frac{f_*^2}{2}\sigma^\prime(\Gamma_*)\\
z\left(S_1 \frac{g_*^2}{2}+S_1\mu\left(\frac{f_*^2}{2}+f_*g_*\right)\right)\Gamma_* & zR_1\frac{f_*^2}{2}\Gamma_* & d_3
\end{pmatrix},
\]
where
\begin{align*}
d_1&:=\frac{S_1\mu}{R_1}\left(S_1 \frac{g_*^3}{3}+S_1\mu\left(\frac{f_*^3}{3}+ f_*^2g_*+f_*g_*^2\right)\right), \\[5pt]
d_3&:=-z(\mu f_*+g_*)\Gamma_*\sigma^\prime(\Gamma_*)+zD.
\end{align*}
Introducing the to $\tilde{a}_G^z(u_*)$ corresponding symmetric matrix 
\begin{equation*}
\hspace{-0.4cm}b_G^z(u_*):=\begin{pmatrix}
d_1 & S_1\mu \left(\frac{f_*^3}{3}+\frac{f_*^2g_*}{2}\right)  & j\\
 S_1\mu\left(\frac{f_*^3}{3}+\frac{f_*^2g_*}{2}\right) & R_1\frac{f_*^3}{3} & -\frac{1}{2}\left(\mu\frac{f_*^2}{2}\sigma^\prime(\Gamma_*)-zR_1\frac{f_*^2}{2}\Gamma_*\right)\\
j & -\frac{1}{2}\left(\mu\frac{f_*^2}{2}\sigma^\prime(\Gamma_*)-zR_1\frac{f_*^2}{2}\Gamma_*\right) & d_3 
\end{pmatrix}.
\end{equation*}
with
\begin{align*}
j&:=-\frac{1}{2}\left(\frac{S_1\mu}{R_1}\left(\mu \frac{f_*^2}{2}+\mu f_*g_*+\frac{g_*^2}{2}\right)\sigma^\prime(\Gamma_*)-z\left(S_1 \frac{g_*^2}{2}+S_1\mu\left(\frac{f_*^2}{2}+f_*g_*\right)\right)\Gamma_*\right),
\end{align*}
 we obtain that
\begin{equation*}
\frac{1}{2}\frac{d}{dt}\left(\frac{S_1\mu}{R_1}\|f+g\|_2^2+\|f\|_2^2+z\|\Gamma\|_2^2\right)+\left(b_G^z(u_*)\partial_x \begin{pmatrix} f+g\\ f \\ \Gamma \end{pmatrix} \left|\partial_x \begin{pmatrix} f+g\\ f \\ \Gamma \end{pmatrix} \right.  \right)_{2}=0.
\end{equation*}
If $\Gamma_*=0$, the matrix 
\begin{equation*}
b_G^z(f_*,g_*,0)=\begin{pmatrix}
\frac{S_1\mu}{R_1}\left(S_1 \frac{g_*^3}{3}+S_1\mu\left(\frac{f_*^3}{3}+ f_*^2g_*+f_*g_*^2\right)\right) & S_1\mu \left(\frac{f_*^3}{3}+\frac{f_*^2g_*}{2}\right)  & -\frac{\mu}{2}\frac{f_*^2}{2}\sigma^\prime(0)\\
 S_1\mu\left(\frac{f_*^3}{3}+\frac{f_*^2g_*}{2}\right) & R_1\frac{f_*^3}{3} & -\frac{\mu}{2}\frac{f_*^2}{2}\sigma^\prime(0)\\
-\frac{\mu}{2}\frac{f_*^2}{2}\sigma^\prime(0) & -\frac{\mu}{2}\frac{f_*^2}{2}\sigma^\prime(0) & zD 
\end{pmatrix}
\end{equation*}
is positive definite for some sufficient large constant $z>0$, since then all principal minors are  positive.
Hence, there exists $z>0$ and  $\e =\e(f_*,g_*)>0$ such that for $0\leq\Gamma_*<\e$ the matrix  $b_G^z(f_*,g_*,\Gamma_*)$ is positive definite and we deduce that
\[\frac{1}{2}\frac{d}{dt}\left(\frac{S_1\mu}{R_1}\|f+g\|_2^2+\|f\|_2^2+z\|\Gamma\|_2^2\right) \leq  -\eta \left\Vert \partial_x \begin{pmatrix} f+g\\ f \\ \Gamma \end{pmatrix}  \right\Vert_{2}^2\]
for some positive constant $\eta>0$. Recall that the average value of $\tilde{w}:=(f+g,f,\Gamma)$ where $(f,g,\Gamma)\in PH_N^2$ is given by  $\langle \tilde{w} \rangle=0$. Hence, there exists, by Poincar\'e's inequality, a constant $c>0$ such that $\|\tilde{w}\|_2^2\leq c^{-1}\|\partial_x \tilde{w}\|_2^2$ and it follows that
\[\frac{1}{2}\frac{d}{dt}\left(\frac{S_1\mu}{R_1}\|f+g\|_2^2+\|f\|_2^2+z\|\Gamma\|_2^2\right) \leq  -\eta c (\|f+g\|_2^2+\|f\|_2^2+\|\Gamma\|_2^2).\]
Set $m:=\mbox{max}\left\{\frac{S_1\mu}{R_1},z,1 \right\}$, then
\begin{equation}\label{ddt}
\begin{array}{lcl}
\displaystyle{\frac{1}{2}}\frac{d}{dt}\left(\frac{S_1\mu}{R_1}\|f+g\|_2^2+\|f\|_2^2+z\|\Gamma\|_2^2\right) &\leq &  -\frac{\eta c}{m} (m\|f+g\|_2^2+m\|f\|_2^2+m\|\Gamma\|_2^2) \\[5pt]
&\leq & -\frac{\eta c}{m} \left(\displaystyle{\frac{S_1\mu}{R_1}}\|f+g\|_2^2+\|f\|_2^2+z\|\Gamma\|_2^2\right).
\end{array}
\end{equation}
We will show that $\tilde{w}=(f+g,f,\Gamma)$, where $(f,g,\Gamma)$ is a solution to $(\ref{LP})$, has exponential decay, which implies that also $w=(f,g,\Gamma)$ is exponentially decreasing.
Observe that for $\tilde{w}=(f+g,f,\Gamma)\in PL_2$
\[\vert\vert\vert \tilde{w} \vert\vert\vert_2:= \left(\frac{S_1\mu}{R_1}\|f+g\|_2^2+\|f\|_2^2+z\|\Gamma\|_2^2\right)^{\frac{1}{2}}\]
defines an equivalent norm on $PL_2$. 
In virtue of $(\ref{ddt})$, we deduce that $\frac{d}{dt} \vert\vert\vert\tilde{w}\vert\vert\vert_2^2 \leq -C  \vert\vert\vert\tilde{w}\vert\vert\vert_2^2$ with $C:=2\frac{\eta c}{m}>0$. Hence, 
\begin{equation}\label{An} \vert\vert\vert\tilde{w}\vert\vert\vert_2 \leq e^{-t\frac{C}{2}} \vert\vert\vert\tilde{w}^0\vert\vert\vert_2,\qquad t\geq 0,\qquad \tilde{w}^0=(f^0+g^0,f^0,\Gamma^0).
\end{equation}
By equivalence of the norms $\vert\vert\vert\cdot \vert\vert\vert_2$ and $\|\cdot\|_2$, we obtain that $\|\tilde{w}\|_2\leq \tilde{c}e^{-t\frac{C}{2}} \|\tilde{w}^0\|_2$ for some constant $\tilde{c}>0$,
which means that $\tilde{w}$ has exponential decay. Therefore, also $w$ has exponential decay and
$$\|w(t)\|_2 = \|e^{-tA_*}w^0\|_2\leq Me^{-t\omega_0}\|w^0\|_2,\qquad t\geq 0,$$ for some $M\geq 1$ and $\omega_0 >0$. We deduce that $\spec (-A_G^*)\subset [\re z \leq -\omega_0]$.
\end{proof}

Combining Lemma \ref{I}, Lemma \ref{K} and $(\ref{FZ})$, we apply \cite[Theorem 9.1.2]{Lun} and arrive at the following asymptotic stability result for steady states of \eqref{system1}:

\begin{thm}[Asymptotic Stability]
Let $\alpha\in (\frac{3}{4},1)$, $\sigma\in C^2(\mathbb{R})$ be strictly decreasing and Assumption G1) be satisfied. Further let $f_*,g_*>0$ be arbitrary. Then there exist numbers $\varepsilon=\varepsilon(f_*,g_*)>0$, $\omega>0$ and $M\geq 1$, such that for $0\leq \Gamma_*<\varepsilon$ and any initial data $u^0=(f^0,g^0,\Gamma^0)\in H^2_N$ with $\langle f^0 \rangle = f_*$, $\langle g^0 \rangle = g_*$ and $\langle \Gamma^0 \rangle = \Gamma_*$ satisfying the smallness condition
$\|u^0-u_*\|_{H^2}\leq\varepsilon,$
there exists a unique global positive solution $$f,g,\Gamma\in C([0,\infty),U^\alpha)\cap C^\alpha([0,\infty),L_2)\cap C^1((0,\infty),L_2)\cap C((0,\infty),H_N^2)$$ to \eqref{system1}. The solution satisfies
$$\|u(t)-u_*\|_{H^2}+\|\partial_t u(t)\|_2\leq Me^{-\omega t}\|u^0-u_*\|_{H^2}\qquad \mbox{for}\quad t\geq 0,$$
where $u_*=(f_*,g_*,\Gamma_*)$.
\end{thm}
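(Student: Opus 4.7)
The plan is to apply the principle of linearized stability in the form of [Lun, Theorem 9.1.2] to the reformulated problem \eqref{Z}, for which essentially all the preparatory work has already been done. The three ingredients required by that theorem---generation of an analytic semigroup by $-A_G^\ast$ on the base space, a suitable spectral gap, and a $C^1$ nonlinearity vanishing together with its derivative at the equilibrium---are supplied by Lemma \ref{I}, Lemma \ref{K}, and \eqref{FZ}, respectively. The proof therefore reduces to assembling these pieces and transferring the decay obtained for $z = P u$ in $PH_N^2$ back to the original variable $u = z + u_\ast$.

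First, I would fix $f_\ast, g_\ast > 0$ and pick $\varepsilon > 0$ no larger than the threshold $\varepsilon(f_\ast, g_\ast)$ furnished by Lemma \ref{K}, and small enough so that the embedding $H^2(0,L) \hookrightarrow C([0,L])$ ensures $\|u^0 - u_\ast\|_{H^2} \le \varepsilon$ forces $u^0 \in U^\alpha$, i.e.\ componentwise strict positivity. By Theorem \ref{LE}, a unique positive strong solution exists on a maximal interval $[0,T)$, and the average conditions on $u^0$ combined with Lemma \ref{conservation} guarantee $(1-P)u(t) = u_\ast$ for all $t \in [0,T)$. Consequently $z(t) := u(t) - u_\ast = P u(t) \in PH_N^2$ and $z$ solves \eqref{Z} with $z(0) = u^0 - u_\ast$.

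Next, on the space $PL_2$ with $PH_N^2$ as the domain, all hypotheses of [Lun, Theorem 9.1.2] are satisfied: $-A_G^\ast \in \mathcal{H}(PH_N^2, PL_2)$ by Lemma \ref{I}, $\mathrm{spec}(-A_G^\ast) \subset \{\re z \le -\omega_0\}$ by Lemma \ref{K} under the smallness assumption $0 \le \Gamma_\ast < \varepsilon$, and $F \in C^1(PH_N^2, PL_2)$ with $F(0) = F^\prime(0) = 0$ by \eqref{FZ}. The theorem then produces constants $M \geq 1$, $\omega \in (0, \omega_0)$ and a further smallness radius $\varepsilon^\prime > 0$ such that whenever $\|z(0)\|_{H^2} \le \varepsilon^\prime$, the solution $z$ exists globally in $PH_N^2$ and obeys
\[
\|z(t)\|_{H^2} + \|\partial_t z(t)\|_{2} \le M e^{-\omega t} \|z(0)\|_{H^2}, \qquad t \ge 0.
\]
Shrinking $\varepsilon$ a final time so that $\varepsilon \le \varepsilon^\prime$ and so that the resulting uniform $H^2$-bound on $z$ (hence the uniform $C$-bound via Sobolev embedding) keeps $u(t) = z(t) + u_\ast$ inside the open set $U^\alpha$ for all times, one excludes the possibility that the maximal time of Theorem \ref{LE} is finite.

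The final step is cosmetic: since $u(t) - u_\ast = z(t)$, the decay estimate above is the assertion of the theorem. The only delicate point---and the one I expect to be the main obstacle---is the last item, namely showing that the global Lunardi-type solution $z$ in $PH_N^2$ coincides with the unique strong solution from Theorem \ref{LE} and remains strictly positive for all $t \geq 0$. This is handled by the continuity $H^2 \hookrightarrow C([0,L])$: an $H^2$-bound of size $O(\varepsilon)$ keeps $u(t)$ in a uniform $C$-neighborhood of the strictly positive equilibrium $u_\ast$, hence in $U^\alpha$, so the local existence result can be applied repeatedly (or its continuous dependence statement invoked) to identify the two solutions and to rule out blow-up or loss of positivity. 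Once this identification is in place, all claimed regularity and the exponential estimate follow at once.
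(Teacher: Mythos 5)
Your proposal is correct and follows essentially the same route as the paper, which likewise obtains the theorem by combining Lemma \ref{I}, Lemma \ref{K} and \eqref{FZ} and invoking the principle of linearized stability in the form of \cite[Theorem 9.1.2]{Lun} for the reduced problem \eqref{Z}. Your additional remarks on preserving positivity via the embedding $H^2\hookrightarrow C([0,L])$ and on identifying the Lunardi solution with the one from Theorem \ref{LE} are sensible elaborations of steps the paper leaves implicit.
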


\section{Analysis of the capillary driven two--phase thin film flow with insoluble surfactant}\label{well posedness capillary}
 
 This section is devoted to study the two--phase thin film equation equipped with insoluble surfactant,  where capillary effects serve as the only driving force. The evolution of the two--phase flow is  described by \eqref{system} when $k=3$, which is then a degenerated, strongly coupled system of fourth order. Analogously to the previous section, we prove a well--posedness and asymptotic stability result. 
It occurs in particular one major difference in treating the fourth--order system  with regard to the second--order system studied in Section \ref{Ch2}. Observe that \eqref{system} in the case $k=3$ is of fourth order in the evolution equations for the two film heights and only of second order in the evolution equation for the surfactant concentration, which is strongly coupled to the fourth--order equations. Translating \eqref{system} into an abstract setting, the appearing matrix operator is of mixed order. The strong coupling of evolution equations of different orders courses difficulties in studying the matrix operator. Still, demanding a smallness condition on the surfactant concentration, we are able to show, by a perturbation argument, that the matrix operator is a generator of an analytic semigroup,
so that as before \cite[Theorem 12.1]{ANon} implies the well--posedness.
We will see that, in contrary to Theorem \ref{LE}, which states the well--posedness for the gravity driven two--phase thin film flow, considering the two--phase thin film with insoluble surfactant, where capillary effects are the only driving force, we do not need any assumption on the density of the two fluids. As before, the energy functional in Lemma \ref{EnF} provides that the set of steady states is determined by constants. Studying stability properties of these steady states then is similar to the analysis in the previous section. 

Recall the system of evolution equations given in \eqref{system} ($k=3$):
\begin{align}
\nonumber
&\partial_t f =\partial_x \left[ f\left(\displaystyle{\frac{R_3f^2}{3}}\partial_x^3 f + S_3\mu \left(\displaystyle{\frac{f^2}{3}} +\displaystyle{\frac{fg}{2}} \right) \partial_x^3(f+g) -\mu 	\displaystyle{\frac{f}{2}}\partial_x\sigma(\Gamma)\right)\right],\\[15pt]
\label{system2}
& \partial_t g = \partial_x \left[g\left( \displaystyle{\frac{R_3f^2}{2}} \partial_x^3 f +S_3\left(  \displaystyle{\frac{g^2}{3}} +\mu \left( \displaystyle{\frac{f^2}{2}}+fg\right)\right)\partial_x^3 (f+g) 
-\left(\mu f +\displaystyle{\frac{g}{2}} \right) \partial_x \sigma(\Gamma)\right)\right], \\[15pt]
\nonumber
&\partial_t \Gamma =\partial_x\left[\Gamma\left( \displaystyle{\frac{R_3f^2}{2}} \partial_x^3 f +S_3\left(  \displaystyle{\frac{g^2}{2}}+\mu \left(\displaystyle{\frac{f^2}{2}}+fg\right)\right)\partial_x^3 (f+g)
- \left(\mu f+g\right) \partial_x \sigma(\Gamma)\right)+ D\partial_x\Gamma\right]
\end{align}
for $t>0$ and $x\in (0,L)$ with initial data at $t=0$
\begin{equation*}
f(0,\cdot)=f^0,\quad g(0,\cdot)=g^0, \quad \Gamma(0,\cdot)=\Gamma^0
\end{equation*}
and boundary conditions
\begin{equation*}
\begin{array}{lll}
&\partial_xf=\partial_xg=\partial_x\Gamma =0,\\[10pt]
&\partial_x^3f=\partial_x^3g=0
\end{array}
\end{equation*}
at $x=0,L$. 
Recall, that the material constants $R_3$ and $S_3$ are negative and given by
\begin{equation*}
R_3:=-\sigma_1^c,\qquad S_3:= -\sigma_2^c.
\end{equation*}
We impose the following assumptions:
Given the surface tension coefficients  $\sigma_1^c\geq 0$ and $\sigma_2$ of the form
\[\sigma_2(\Gamma)= \sigma_2^c+\sigma(\Gamma),\]
we assume that  the part of the surface tension,  which depends on $\Gamma$, is non--increasing and the part of the surface tension, which is independent of the concentration of surfactant, is strictly positive, that is,
\begin{itemize}
\item[S1) ] $\sigma\in C^2(\R)$ and $-\sigma^\prime(s)\geq0$ for all $s\geq 0$,
\item[S2) ] $\sigma_1^c, \sigma_2^c >0$.
\end{itemize}

\subsection{Well--Posedness}\label{K31}

We need to define suitable spaces for the well--posedness of the system of evolution equations \eqref{system2}. Given $k\in \N$ and $n \in \N$, we set in the sequel
\begin{equation*}
H_B^k(0,L;\R^n):= \{u\in H^k(0,L;\R^n)\mid \partial_x^{2l+1}u(0)=\partial_x^{2l+1}u(L)=0\; \mbox{for all}\; l\in \N \;\mbox{with}\; 2l+2\leq k\}.
\end{equation*} 
These spaces are well defined by the Sobolev Embedding Theorem and endowed with the usual Sobolev norms. Since the system we are analyzing features both, second-- and fourth--order derivatives, the space $H_B^4(0,L;\R^2)\times H_B^2(0,L;\R) $ will play an important role. 
For $\alpha\in[0,1]$ and $\e>0$ we define \footnote{We use the norm of $\left( H^{4\alpha}(0,L;\R^2)\times H^{2\alpha}(0,L;\R) \right)\cap C([0,L],\R^3)$ to endow $O^\alpha$ with a metric. }
\begin{align*}
&O^\alpha:= \left( H_B^{4\alpha}(0,L;\R^2)\times H_B^{2\alpha}(0,L;\R) \right)\cap C([0,L],(0,\infty)^3), \\[5pt]
&O^\alpha_\e:=O^\alpha \cap  \{ u=(f,g,\Gamma) \in  H_B^{4\alpha}(0,L;\R^2)\times H_B^{2\alpha}(0,L;\R) \mid \|\Gamma\|_{H^{2\alpha}}<\e \},
\end{align*}
where  
$$ H_B^{s}(0,L;\R^n):= \left\{ \begin{array}{lcl}\{u\in H^{s}(0,L;\R^n)\mid \partial_x u=\partial_x^3u=0 \:\mbox{at}\: x=0,L\},\qquad &\mbox{if}&\; s\in (\frac{7}{2},4],\\[5pt]
\{u\in H^{s}(0,L;\R^n)\mid \partial_x u=0 \:\mbox{at}\: x=0,L\},\qquad &\mbox{if}&\; s\in(\frac{3}{2},\frac{7}{2}],\\[5pt]
H^{s}(0,L;\R^n), &\mbox{if}&\; s\in[0,\frac{3}{2}]
\end{array}\right.$$
with $H^{s}(0,L;\R^n)$ being the \emph{Bessel potential space} for $s\in [0,4]$. The product space  
\[ H_B^{4\alpha}(0,L;\R^2)\times H_B^{2\alpha}(0,L;\R)\]
 is the complex interpolation space $[L_2(0,L;\R^2)\times L_2(0,L;\R), H_B^{4}(0,L;\R^2)\times H_B^{2}(0,L;\R)]_{\alpha}$ for $\alpha\in [0,1]$ between the product spaces $H_B^{4}(0,L;\R^2)\times H_B^{2}(0,L;\R)$ and $L_2(0,L;\R^2)\times L_2(0,L;\R)$.
If $\alpha>\frac{7}{8}$, then
\[H_B^{4\alpha}(0,L;\R^2)\times H_B^{2\alpha}(0,L;\R)\subset C^3([0,L];\R^2)\cap C^1([0,L];\R).\]
 Furthermore, $O^\alpha$ and $O^\alpha_\e$ are open subsets in $ H_B^{4\alpha}(0,L;\R^2)\times H_B^{2\alpha}(0,L;\R)$. Note that the boundary conditions as well as the positivity are already incorporated into the sets $O^\alpha$ and $O^\alpha_\e$. For each $u=(f,g,\Gamma)\in O^\alpha$ we define the matrix $a_c(u)$ as
\begin{equation*}
\begin{pmatrix} (R_3+S_3\mu)\frac{f^3}{3}+S_3\mu\frac{f^2g}{2}& S_3\mu\left(\frac{f^3}{3}+\frac{f^2g}{2}\right)  & -\mu\frac{f^2}{2}\sigma^\prime(\Gamma) \\
 S_3 \frac{g^3}{3}+(R_3+S_3\mu)\frac{f^2g}{2}+S_3\mu fg^2&  S_3 \frac{g^3}{3}+S_3\mu\left(\frac{f^2g}{2}+ fg^2\right)&-\left(\mu fg+\frac{g^2}{2}\right) \sigma^\prime(\Gamma) \\
 \left(S_3\frac{g^2}{2}+(R_3+S_3\mu)\frac{f^2}{2}+S_3\mu fg\right)\Gamma &\left(S_3\frac{g^2}{2}+S_3\mu\left(\frac{f^2}{2}+fg\right)\right)\Gamma &-(\mu f+g)\Gamma \sigma^\prime(\Gamma)+D \end{pmatrix}
\end{equation*}
and rewrite the problem \eqref{system2} as a quasi--linear equation in the space $L^2(0,L;\R^3)$
\begin{equation}\label{A2}
\partial_t u + A_c(u)u =0,\qquad t>0,\qquad u(0)=u^0,
\end{equation}
where $u^0=(f^0,g^0,\Gamma^0)$ and the operator $A_c: =O^\alpha\rightarrow \mathcal{L}(H_B^4(0,L;\R^2)\times H_B^2(0,L;\R), L_2(0,L;\R^3))$ is given by
\[A_c(u)w:=-\partial_x\left(a_c(u)\begin{pmatrix} \partial_x^3 \tilde{f} \\ \partial_x^3 \tilde{g} \\ \partial_x \tilde{\Gamma} \end{pmatrix} \right),\qquad \mbox{for} \quad u\in O^\alpha, w:=(\tilde{f},\tilde{g},\tilde{\Gamma})\in H_B^4(0,L;\R^2)\times H_B^2(0,L;\R).\]

Letting  $\alpha \in (\frac{7}{8},1)$, we prove that there exists $\e>0$, such that starting with an initial data $u^0\in O^\alpha_\e $ and under the Assumptions S1) and S2), there exists a unique, strong solution on some time interval $[0,T)$, where $T\in (0,\infty]$ depends on the initial datum $u^0\in O^\alpha_\e$. 

\begin{thm}[Local Existence]\label{4MT} Let $\alpha\in (\frac{7}{8},1)$, S1) and S2) be satisfied. Then, there exists  $\e >0$, such that given $u^0 = (f^0,g^0,\Gamma^0)\in O^\alpha_{\e}$, the problem \eqref{A2} possesses a unique maximal strong solution
\begin{align*}
(f,g,\Gamma)\in &C([0,T);O^\alpha_{\e})\cap C^\alpha([0,T); L_2(0,L;\R^3))\cap C((0,T);H_B^{4}(0,L;\R^2)\times H_B^{2}(0,L;\R))\\[5pt]
& \cap C^1((0,T);L_2(0,L;\R^3)), 
\end{align*}
with maximal time of existence $T\in (0,\infty]$. Moreover, $u=(f,g,\Gamma)$ depends  continuously on the initial datum $u^0$ with respect to the topology of $O^\alpha$.
\end{thm}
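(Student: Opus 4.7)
The plan is to mirror the strategy of Theorem \ref{LE} and reduce \eqref{A2} to an application of Amann's abstract quasilinear existence result \cite[Theorem 12.1]{ANon}. Two ingredients are required: the generation property
\begin{equation*}
-A_c(u)\in\mathcal{H}\bigl(H_B^4(0,L;\mathbb{R}^2)\times H_B^2(0,L;\mathbb{R}),L_2(0,L;\mathbb{R}^3)\bigr)\quad\text{for every } u\in O^\alpha_\varepsilon,
\end{equation*}
together with the Lipschitz continuity of $u\mapsto A_c(u)$ in the corresponding operator norm. The latter is routine, since the entries of $a_c(u)$ depend polynomially on $(f,g,\Gamma)$ composed with $\sigma'\in C^1$ and, for $\alpha>7/8$, pointwise multiplication is bounded on $H^{2\alpha}(0,L)$ by the Sobolev embedding into $C([0,L])$. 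The substantial step, and the main obstacle, is the generation property.

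The difficulty is that $A_c$ is no longer of uniform order: it combines fourth-order operators acting on $(\tilde f,\tilde g)$ with a second-order operator acting on $\tilde\Gamma$. Viewing it as a Douglis--Nirenberg system with column weights $(4,4,2)$ and row weights $(0,0,0)$, the principal symbol at $u=(f,g,\Gamma)$ and frequency $\xi\neq 0$ is the matrix
\begin{equation*}
p_u(\xi)=\begin{pmatrix} -a_{c,11}(u)\xi^4 & -a_{c,12}(u)\xi^4 & a_{c,13}(u)\xi^2 \\ -a_{c,21}(u)\xi^4 & -a_{c,22}(u)\xi^4 & a_{c,23}(u)\xi^2 \\ -a_{c,31}(u)\xi^4 & -a_{c,32}(u)\xi^4 & a_{c,33}(u)\xi^2 \end{pmatrix}.
\end{equation*}
At the frozen value $\Gamma\equiv 0$ the entries $a_{c,31}$ and $a_{c,32}$ vanish (they carry an overall prefactor $\Gamma$), while $a_{c,33}(f,g,0)=D>0$, so $p_{(f,g,0)}(\xi)$ becomes block upper-triangular. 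Its spectrum is then $\{D\xi^2\}$ together with the eigenvalues of $-a_c^{(2,2)}(f,g,0)\,\xi^4$, where $a_c^{(2,2)}(f,g,0)$ denotes the top-left $2\times 2$ block. A direct computation exploiting $f,g>0$ and assumption S2) shows that $-a_c^{(2,2)}(f,g,0)$ has positive trace and positive determinant, so all eigenvalues of $p_{(f,g,0)}(\xi)$ lie in the open right half-plane. This establishes normal ellipticity at $\Gamma=0$ in the sense of \cite{ANon}, in full analogy with the surfactant-free two-phase thin film analysis of \cite{EM13}.

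For a general $u\in O^\alpha_\varepsilon$, the symbol $p_u(\xi)$ is a perturbation of $p_{(f,g,0)}(\xi)$ by entries proportional to $\Gamma$. Since $\alpha>7/8$, the embedding $H^{2\alpha}(0,L)\hookrightarrow C([0,L])$ gives $\|\Gamma\|_\infty\leq C\|\Gamma\|_{H^{2\alpha}}<C\varepsilon$, so by shrinking $\varepsilon$ this perturbation becomes uniformly small in matrix norm and, by continuity of eigenvalues, the spectrum of $p_u(\xi)$ remains in the open right half-plane. Combined with the Neumann-type boundary conditions encoded in $H_B^4\times H_B^2$, which are normal in the sense of \cite{ANon}, this allows one to invoke \cite[Theorem 4.1]{ANon} to obtain the generation property together with the Lipschitz dependence of $-A_c$ on $u$. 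An application of \cite[Theorem 12.1]{ANon} then yields the unique maximal strong solution in the stated regularity class and its continuous dependence on the initial datum. The structural obstruction worth emphasising is that the entries $a_{c,31}(u)$ and $a_{c,32}(u)$ are genuinely of order four in $(\tilde f,\tilde g)$ and therefore sit in the principal part of $A_c$; they cannot be absorbed as lower-order perturbations of a block-diagonal generator, and it is only their prefactor $\Gamma$, whose $L_\infty$-size is controlled by $\varepsilon$ via the Sobolev embedding, that makes the perturbation argument close and forces the restriction from $O^\alpha$ to $O^\alpha_\varepsilon$.
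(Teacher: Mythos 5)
Your overall architecture matches the paper's: freeze $\Gamma=0$ to obtain a block upper--triangular operator, treat the fourth--order $2\times2$ block and the second--order scalar block separately, and recover the full operator for small $\|\Gamma\|_{H^{2\alpha}}$ by perturbation, before invoking \cite[Theorem 12.1]{ANon}. However, there are two genuine gaps in how you execute the generation property.

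First, for the fourth--order block $A_{11}(f,g)$ you only check interior normal ellipticity (positive trace and determinant of the coefficient matrix) and then assert that the boundary conditions are ``normal in the sense of \cite{ANon}.'' Normality of the boundary operator is not the substantive requirement: to apply \cite[Theorem 4.1]{ANon} (or \cite[Remark 4.2 b)]{ANon}) to a fourth--order system with the conditions $\partial_x u=\partial_x^3u=0$ one must verify the Lopatinskii--Shapiro (complementing) condition. For second--order systems with Neumann conditions this is automatic (which is why the gravity case goes through via \cite[Ex.\ 4.3.e)]{ANon}), but for the fourth--order block it is not, and its verification is the core of the paper's argument: one reduces to an eighth--order ODE on the half--line, computes the roots $\Lambda_k$ of the characteristic polynomial, and shows that the only exponentially decaying solution compatible with $u'(0)=u'''(0)=u^{(5)}(0)=u^{(7)}(0)=0$ is the trivial one. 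This step is entirely missing from your proposal.

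Second, your treatment of the mixed order is not rigorous as stated. Amann's normal ellipticity framework in \cite{ANon} is formulated for systems of a single order $2m$; a Douglis--Nirenberg symbol with column weights $(4,4,2)$ cannot simply be fed into \cite[Theorem 4.1]{ANon}, and the eigenvalue--continuity argument you propose for the perturbation does not close: the off--diagonal perturbations in the third row are $O(\varepsilon\,\xi^4)$ while the diagonal entry there is $D\xi^2$, so the perturbation of the (non--homogeneous) symbol is not uniformly small over $\xi$ and you cannot normalise to $|\xi|=1$. The paper circumvents both issues at the operator level: the block--triangular operator $A_c^0(u)=A_c(f,g,0)$ is a generator by the matrix--generator theorem \cite[Theorem I.1.6.1]{ALin} once each diagonal block generates (the $2\times2$ fourth--order block via Lopatinskii--Shapiro, the scalar block $-D\partial_x^2$ by strong ellipticity), and the passage to $\Gamma\neq0$ uses the perturbation theorem \cite[Theorem I.1.3.1]{ALin}, where $A_c(u)-A_c^0(u)$ is small in $\mathcal{L}(E_1,E_0)$ because of the prefactor $\Gamma$ and the embedding $H^{2\alpha}\hookrightarrow C([0,L])$. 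Your closing remark correctly identifies why the smallness of $\Gamma$ is structurally indispensable, but the perturbation must be performed on operators (relative bounds between $E_1$ and $E_0$), not on the symbol.
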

Set $E_0:= L_2(0,L;\R^3)$ and $E_1:=H_B^{4}(0,L;\R^2)\times H_B^{2}(0,L;\R)$.
 Furthermore let $E_\theta:=[E_1,E_0]_\theta$ be the complex interpolation space between $E_1$ and $E_0$ for $\theta \in [0,1]$.  With 
\[O(\e):= C([0,L];(0,\infty)^3)\cap \{ u=(f,g,\Gamma) \in H^{4}(0,L;\R^2)\times H^{2}(0,L;\R)\mid \|\Gamma\|_{H^{2\alpha}}<\e \},\] we identify $O^\alpha_{\e} = O(\e) \cap E_\alpha$. By taking into account that $A_c$ depends smoothly on its coefficients we obtain that
\begin{equation}\label{needed}
A_c \in C^{1-}(O^\alpha_{\e},\mathcal{L}(E_1,E_0)).
\end{equation}
We show that there exists $\e>0$, such that for fixed $u\in O^\alpha_{\e}$, the linear operator $A_c(u)\in \mathcal{L}(E_1,E_0)$ is the negative generator of an analytic semigroup. Then, Theorem \ref{4MT} is a consequence of \cite[Theorem 12.1]{ANon}.

\begin{thm}\label{Aanal} Let $\alpha\in (\frac{7}{8},1)$ and Assumption S1), S2) be satisfied. Then, there exists $\e >0$, such that given $u=(f,g,\Gamma)\in O^\alpha_{\e}$, the operator $-A_c(u)$ generates an analytic semigroup in $L_2(0,L;\R^3)$, that is
\[-A_c(u)\in \mathcal{H}(H_B^{4}(0,L;\R^2)\times H_B^{2}(0,L;\R);L_2(0,L;\R^3)).\]
\end{thm}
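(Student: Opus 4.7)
The strategy, as suggested in the paragraph preceding the theorem, is a perturbation argument. Write $A_c(u) = A_0(u) + B(u)$, where $A_0(u)$ is the operator obtained from $A_c(u)$ by replacing the two entries of $a_c(u)$ at positions $(3,1)$ and $(3,2)$ (which both carry $\Gamma$ as a prefactor) by zero, and $B(u) := A_c(u) - A_0(u)$ collects exactly these deleted terms. Relative to the decomposition $E_1 = H_B^4(0,L;\mathbb{R}^2) \times H_B^2(0,L;\mathbb{R})$, the operator $A_0(u)$ is then block upper-triangular,
\[
A_0(u) = \begin{pmatrix} A_{fg}(u) & C(u) \\ 0 & A_\Gamma(u) \end{pmatrix},
\]
with $A_{fg}(u)\tilde w = -\partial_x(a_{fg}(u)\partial_x^3 \tilde w)$ the fourth-order $2\times 2$ block acting on $\tilde w = (\tilde f, \tilde g)$ via the upper-left $2\times 2$ submatrix $a_{fg}(u)$ of $a_c(u)$; $A_\Gamma(u)\tilde\Gamma = -\partial_x\bigl((D-(\mu f+g)\Gamma\sigma'(\Gamma))\partial_x\tilde\Gamma\bigr)$ the scalar second-order block; and $C(u)$ the (second-order) coupling coming from the $(1,3), (2,3)$ entries of $a_c(u)$.

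My next step is to prove $-A_0(u)\in\mathcal{H}(E_1,E_0)$. The principal symbol of $A_{fg}(u)$ equals $-a_{fg}(u)\xi^4$; by S2) we have $R_3,S_3<0$, so a direct computation of the principal minors (structurally identical to the positivity analysis for the gravity-driven system of Section \ref{Ch2}, with $R_1,S_1>0$ replaced by $\sigma_1^c,\sigma_2^c>0$) shows that $-a_{fg}(u)$ is symmetric and positive definite for $f,g>0$. Together with the boundary conditions encoded in $H_B^4$, Amann's \cite[Theorem 4.1]{ANon} gives $-A_{fg}(u)\in\mathcal{H}(H_B^4(0,L;\mathbb{R}^2),L_2(0,L;\mathbb{R}^2))$. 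The scalar block has principal coefficient $D-(\mu f+g)\Gamma\sigma'(\Gamma)\geq D>0$ by S1), so normal ellipticity yields $-A_\Gamma(u)\in\mathcal{H}(H_B^2(0,L),L_2(0,L))$. For the triangular composite, the explicit resolvent identity
\[
(\lambda+A_0(u))^{-1} = \begin{pmatrix} (\lambda+A_{fg})^{-1} & -(\lambda+A_{fg})^{-1}\,C(u)\,(\lambda+A_\Gamma)^{-1} \\ 0 & (\lambda+A_\Gamma)^{-1} \end{pmatrix},
\]
together with the uniform bound $\|C(u)(\lambda+A_\Gamma)^{-1}\|_{L_2\to L_2(\mathbb{R}^2)}\leq K$ on the relevant sector (using that $C(u):H_B^2\to L_2(\mathbb{R}^2)$ is bounded and $(\lambda+A_\Gamma)^{-1}$ maps $L_2$ into $H_B^2$ uniformly by elliptic regularity) and the individual resolvent estimates $\|(\lambda+A_{fg})^{-1}\|,\|(\lambda+A_\Gamma)^{-1}\|\leq M/(1+|\lambda|)$, closes the sectorial estimate for $A_0(u)$.

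For the perturbation, the only nonzero component of $B(u)w$ is
\[
[B(u)w]_3 = -\partial_x\bigl(\Gamma\, p_1(f,g)\,\partial_x^3\tilde f + \Gamma\, p_2(f,g)\,\partial_x^3\tilde g\bigr),
\]
with polynomials $p_1,p_2$ in $f,g$. Expanding the outer $\partial_x$ and invoking the Sobolev embedding $H^{2\alpha}(0,L)\hookrightarrow C^1([0,L])$ (valid since $\alpha>7/8$) one obtains
\[
\|B(u)w\|_{L_2(0,L;\mathbb{R}^3)} \leq K(\|f\|_{H^{4\alpha}},\|g\|_{H^{4\alpha}})\,\|\Gamma\|_{H^{2\alpha}}\,\|w\|_{E_1} \leq K(u)\,\varepsilon\,\|w\|_{E_1}
\]
for every $u\in O^\alpha_\varepsilon$. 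Since elliptic regularity gives $\|w\|_{E_1}\leq \tilde K(\|A_0(u)w\|_{L_2}+\|w\|_{L_2})$, the operator $B(u)$ is $A_0(u)$-bounded with relative bound $O(\varepsilon)$, uniformly on $O^\alpha_\varepsilon$. For $\varepsilon$ sufficiently small, the standard perturbation theorem for generators of analytic semigroups (e.g. \cite[Theorem I.1.3.1]{ALin}) then yields $-A_c(u) = -A_0(u) - B(u) \in \mathcal{H}(E_1,E_0)$, which is the claim.

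The main obstacle is the sectoriality of $-A_0(u)$: because the coupling $C(u)$ is only two orders below the dominant fourth-order block, one cannot treat it as a plain lower-order perturbation of $A_{fg}$, and the argument must genuinely exploit the triangular structure together with the two-derivative gain of the $\Gamma$-resolvent to close the sectorial estimate uniformly in $\lambda$. Once this is settled, the smallness of $\Gamma$ turns the non-triangular remainder $B(u)$ into an $A_0(u)$-bounded term of arbitrarily small relative bound, and the conclusion follows by classical perturbation theory.
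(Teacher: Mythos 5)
Your overall architecture --- an upper--triangular reference operator obtained by suppressing the $\Gamma$--weighted lower--left entries of $a_c(u)$, the matrix--generator result for triangular operators, and a perturbation argument exploiting $\|\Gamma\|_{H^{2\alpha}}<\e$ --- is essentially the paper's strategy (the paper freezes the whole coefficient matrix at $\Gamma=0$ and uses openness of $\mathcal{H}(E_1,E_0)$ via \cite[Theorem I.1.3.1]{ALin}, while you keep $\Gamma$ in the entries that do not vanish and use a relative--bound perturbation; both variants are fine). The triangular resolvent identity and the smallness estimate for $B(u)$ are also sound.

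The genuine gap is in the step where you dispose of the fourth--order block. You claim that positive definiteness of $-a_{fg}(u)$ ``together with the boundary conditions encoded in $H_B^4$'' and Amann's Theorem 4.1 yields $-A_{fg}(u)\in\mathcal{H}(H_B^4(0,L;\R^2),L_2(0,L;\R^2))$. For a \emph{fourth--order} system, interior normal ellipticity of the symbol is not sufficient for generation: one must additionally verify that the boundary operator $(\partial_x,\partial_x^3)$ at $x=0,L$ satisfies the Lopatinskii--Shapiro condition relative to $-\partial_x(\tilde a\,\partial_x^3\cdot)$. This is not covered by the standard second--order examples in \cite{ANon} (the paper invokes \cite[Ex.~4.3.e)]{ANon} only for the second--order gravity--driven system), and it is precisely where the paper spends most of its effort: reducing the half--line problem to an $8$th--order scalar ODE, computing the roots $\Lambda_k$ of its characteristic polynomial, discarding the non--decaying modes, and checking that the resulting $4\times4$ generalized Vandermonde determinant in $\Lambda_2,\Lambda_4,\Lambda_6,\Lambda_8$ is nonzero, so that only the trivial decaying solution remains. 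Without this verification the generation property of $A_{fg}$, and hence of $A_0(u)$, is unproved. A minor but real error in the same step: $-a_{fg}(u)$ is \emph{not} symmetric (compare its $(1,2)$ and $(2,1)$ entries); what the positivity of the principal minors buys is that the spectrum of the symbol lies in $[\re z>0]$, i.e.\ normal ellipticity, not symmetric positive definiteness.
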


Set $A_c^0(u):=A_c(f,g,0) $ for $u\in O^\alpha$. The principal symbol of $A_c^0(u)$ is then given by
\begin{align*}
-a_c(f,g,0)=\begin{pmatrix} (R_3+S_3\mu)\frac{f^3}{3}+S_3\mu\frac{f^2g}{2}& S_3\mu\left(\frac{f^3}{3}+\frac{f^2g}{2}\right)  & -\mu\frac{f^2}{2}\sigma^\prime(0) \\
 S_3 \frac{g^3}{3}+(R_3+S_3\mu)\frac{f^2g}{2}+S_3\mu fg^2&  S_3 \frac{g^3}{3}+S_3\mu\left(\frac{f^2g}{2}+ fg^2\right)&-\left(\mu fg+\frac{g^2}{2}\right) \sigma^\prime(0) \\
0 &0 &D 
\end{pmatrix}.
\end{align*}
 It follows immediately from a result on matrix generators \cite[Theorem I.1.6.1]{ALin}, that $A_c^0(u)$ is the negative generator of an analytic semigroup, if
\begin{align*}
\left(A_{11}(f,g)\right)\begin{pmatrix}\tilde{f} \\ \tilde{g}\end{pmatrix}:=-\partial_x\left(\begin{pmatrix} (R_3+S_3)\frac{f^3}{3}+S_3\mu\frac{f^2g}{2} & S_3\mu\left(\frac{f^3}{3}+\frac{f^2g}{2}\right) \\ 
						 S_3\frac{g^3}{3}+(R_3+S_3)\frac{f^2g}{2}+S_3 fg^2 &  S_3\frac{g^3}{3}+S_3\mu\left(\frac{f^2g}{2}+ fg^2\right)
				\end{pmatrix}\partial_x ^3\begin{pmatrix}\tilde{f} \\ \tilde{g}\end{pmatrix}\right)
\end{align*}
and
$D\partial_x^2 $
are negative generators of analytic semigroups.
Then, by means of a perturbation argument, we obtain the existence of $\e>0$, such that $A_c(u), u\in O^\alpha_\e,$ is the negative generator of an analytic semigroup.

\begin{satz}\label{A11analytic} Let $\alpha\in (\frac{7}{8},1)$, S1) and S2) be satisfied. Then
\begin{itemize}
\item[i) ] $-A_{11}(f,g)\in \mathcal{H}(H_B^4(0,L;\R^2), L_2(0,L;\R^2))$ for all $(f,g)\in \{H_B^{4\alpha}(0,L;\R^2)\mid f,g>0\}$,
\item[ii) ] $-D\partial_x^2\in \mathcal{H}( H_B^2(0,L;\R), L_2(0,L;\R))$ .
\end{itemize}
\end{satz}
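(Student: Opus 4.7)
Part (ii) is classical: the Neumann realization of $-\partial_x^2$ is a non-negative self-adjoint operator on $L_2(0,L)$ and hence generates a contractive analytic semigroup, so that $-D\partial_x^2 \in \mathcal{H}(H_B^2(0,L;\R),L_2(0,L;\R))$.

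For part (i), the strategy is to fit $A_{11}(f,g)$ into Amann's framework of normally elliptic boundary value problems, applied to a fourth-order $2\times 2$ system with the Neumann-type boundary operator $B\tilde u = (\partial_x \tilde u, \partial_x^3 \tilde u)\vert_{x=0,L}$, and then quote \cite[Ex.~4.3.e), Thm.~4.1]{ANon} to conclude $-A_{11}(f,g)\in \mathcal{H}(H_B^4(0,L;\R^2),L_2(0,L;\R^2))$. The two substantive items are normal ellipticity of the principal symbol and the Lopatinskii--Shapiro complementing condition.

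For normal ellipticity, the principal symbol of $A_{11}(f,g)$ at frequency $\xi\in\R$ is $-M(f,g)\,\xi^4$, where $M(f,g)$ denotes the $2\times 2$ coefficient matrix; I would verify that $M(f,g)$ has spectrum strictly in $\{\operatorname{Re} z <0\}$, uniformly on compact subsets of $\{f,g>0\}$, via the trace--determinant criterion. The trace
\[
\operatorname{tr} M(f,g) = R_3\tfrac{f^3}{3} + S_3\mu\bigl(\tfrac{f^3}{3} + f^2 g + f g^2\bigr) + S_3\tfrac{g^3}{3}
\]
is strictly negative by S2), since $R_3 = -\sigma_1^c<0$ and $S_3 = -\sigma_2^c<0$ while $f,g,\mu>0$. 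For the determinant, writing
\[
M(f,g) = \begin{pmatrix} f & 0 \\ 0 & g \end{pmatrix}\begin{pmatrix} F_1+F_2 & F_2 \\ G_1+G_2 & G_2 \end{pmatrix},\qquad F_1=\tfrac{R_3f^2}{3},\;F_2=S_3\mu\!\left(\tfrac{f^2}{3}+\tfrac{fg}{2}\right),\;G_1=\tfrac{R_3f^2}{2},\;G_2=S_3\!\left(\tfrac{g^2}{3}+\mu\!\left(\tfrac{f^2}{2}+fg\right)\right),
\]
a short computation yields
\[
\det M(f,g) = fg\,(F_1 G_2 - F_2 G_1) = \frac{R_3 S_3\, f^3 g^2\,(3\mu f + 4g)}{36},
\]
which is strictly positive because $R_3 S_3 = \sigma_1^c\sigma_2^c > 0$. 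Hence both eigenvalues of $M(f,g)$ have strictly negative real part, so $-M(f,g)\xi^4$ lies in a sector of the right half-plane, uniformly in $\xi\neq 0$ and locally uniformly in $(f,g)$.

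The Lopatinskii--Shapiro condition for the boundary operator $(\partial_x, \partial_x^3)$ is a routine half-line check for fourth-order systems, completely analogous to the corresponding verification in the two-phase fourth-order model of \cite{EM13, EMM}: one diagonalises the principal part using the spectral decomposition of $M(f,g)$ and shows that the only decaying half-line solutions satisfying homogeneous Neumann-type data are trivial. Combined with the smooth dependence of $M$ on $(f,g)$ and the already-established normal ellipticity, Amann's theorem delivers~(i). The main non-routine ingredient is the determinant computation, which is rescued by the observation that $M$ has an exact $\operatorname{diag}(f,g)$ row-factor; the remainder of the argument is standard.
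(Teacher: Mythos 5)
Your proposal is correct and follows the same overall architecture as the paper: normal ellipticity of the principal symbol plus the Lopatinskii--Shapiro condition for the Neumann-type boundary operator $(\partial_x,\partial_x^3)$, then Amann's generation theorem; part (ii) is handled identically (the paper cites the strong ellipticity of $-D\partial_x^2$ via Pazy, you invoke self-adjointness of the Neumann Laplacian --- both standard). The differences are in where the detail sits. For normal ellipticity the paper merely asserts that the principal minors of $-\tilde a(X)$ are positive, whereas you actually prove it: your $\operatorname{diag}(f,g)$ factorization and the resulting identity $\det M = R_3S_3 f^3 g^2(3\mu f+4g)/36$ check out against the coefficient matrix, and together with $\operatorname{tr}M<0$ this gives the spectrum condition by the trace--determinant criterion for $2\times2$ matrices (the cleaner argument, since $\tilde a$ is not symmetric and ``positive definite'' is a slight abuse in the paper). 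Conversely, for the Lopatinskii--Shapiro condition the paper does the work you defer: it eliminates $u_2$ to obtain an $8$th-order scalar ODE on the half-line, computes the roots $\Lambda_k$ of the characteristic polynomial explicitly, and kills the coefficients of the decaying modes via a nonvanishing Vandermonde-type determinant, using that the two eigenvalues $E_\pm$ of the (inverse) coefficient matrix are distinct. Your alternative of diagonalizing $M$ and reducing to two scalar fourth-order half-line problems would also work (the boundary operator commutes with a constant change of basis), but like the paper's elimination it silently assumes $M$ is diagonalizable; if you write it up, either verify that the eigenvalues are distinct for the matrix at hand or add a Jordan-block remark. As a plan, nothing here would fail.
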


In view of $D>0$, the operator $-D\partial_x^2$ is strongly elliptic and it is already well known that a strongly elliptic second--order operator  is the negative generator of an analytic semigroup on $L_2(0,L;\R)$ (cf. e.g. \cite[Theorem 7.2.7]{Paz}). We are left to show part $i)$ of Proposition \ref{A11analytic}. 

Following the lines of the proof of \cite[Lemma 4.1]{EMM}, where a similar problem is investigated for the more general case $n\geq 1$ (here $\Omega=(0,L)\subset \R$),  we show Proposition \ref{A11analytic} i) by verifying the \emph{Lopatinskii--Shapiro condition} for the pair $(\A,\mathcal{B})$, where $\A :=A_{11}(X)Y=-\partial_x(\tilde{a}(X)\partial_x^3Y)$ for $X=(f,g)\in \{X\in H^{4\alpha}_B(0,L;\R^2)\mid X>0\}$ fixed and $Y\in H_B^4(0,L;\R^2)$ with 
 \begin{align*}
 \tilde{a}(X)=\begin{pmatrix} (R_3+S_3)\frac{f^3}{3}+S_3\mu\frac{f^2g}{2} & S_3\mu\left(\frac{f^3}{3}+\frac{f^2g}{2}\right) \\ 
						 S_3\frac{g^3}{3}+(R_3+S_3)\frac{f^2g}{2}+S_3 fg^2 & S_3 \frac{g^3}{3}+S_3\mu\left(\frac{f^2g}{2}+ fg^2\right)
				\end{pmatrix} \qquad \mbox{in}\; \Omega=(0,L)
 \end{align*}
  and $\mathcal{B}$ being the boundary operator $\mathcal{B}:=(\mathcal{B}_1,\mathcal{B}_2,\mathcal{B}_3,\mathcal{B}_4)$ with
\begin{align*}\mathcal{B}_1Y=(1,0)\partial_x Y,\qquad \mathcal{B}_2Y=(0,1)\partial_x Y,\qquad \mathcal{B}_3Y=(1,0)\partial_x^3 Y,\qquad \mathcal{B}_4Y=(0,1)\partial_x^3 Y   \end{align*}
 on $\partial \Omega =\{ 0,L \}$ for $Y \in H^4_B(0,L;\R^2)$. 
The associate principal symbols of $(\A,\mathcal{B})$ are given by
\begin{alignat*}{2}
  & a_\pi(x,\xi)= -\tilde{a}(X(x))|\xi|^4\qquad &&\mbox{for}\quad (x,\xi)\in [0,L]\times \R,\\[5pt]
  &b_\pi(x,\xi)= \left((1,0)\xi,(0,1)\xi,(1,0)\xi^3,(0,1)\xi^3\right)\qquad &&\mbox{for}\quad (x,\xi)\in \{0,L\}\times \R.
\end{alignat*}
 The operator $\A$ is normally elliptic, since
 \begin{align*}
 \spec(a_\pi(x,\xi))\subset [\re z>0]\qquad \mbox{for all} \quad (x,\xi)\in [0,L]\times \{\xi\in \R\mid |\xi|=1\},
 \end{align*}
 which can be easily verified  by observing that the principal minors of $-\tilde{a}(X)$ are positive, which implies that $-\tilde{a}(X)$ is positive definite \footnote{Recall, that $R_3,S_3<0$.}. The boundary operator $\mathcal{B}$ is said to satisfy the \emph{Lopatinskii--Shapiro condition} with respect to $\A$ if for each $(x,\xi)$ belonging to the tangent bundle $T(\partial \Omega)$ and $\lambda \in [\re z\geq 0]$ with $(\xi,\lambda)\neq 0$ the only exponentially decaying solution of the boundary value problem on the half--line
 \begin{align}\label{LScon}
 [\lambda+a_\pi(x,\xi+ i\partial_t)]u =0,\quad t>0,\qquad b_\pi(x,\xi+i\partial_t)u(0)=0
 \end{align}
 is the zero solution. Then, the boundary value problem $(\A,\mathcal{B})$ is \emph{normally elliptic} if $\A$ is normally elliptic and $\mathcal{B}$ satisfies \eqref{LScon}.
 Due to \cite[Remark 4.2 b)]{ANon} it is sufficient to verify the Lopatinskii--Shaprio condition \eqref{LScon} for $(\A,\mathcal{B})$ in order to prove that $\A$ is the negative generator of an analytic semigroup. Since $\Omega=(0,L)$ is a subset of an one--dimensional space, the boundary $\partial \Omega=\{0,L\}$ is of dimension zero, which implies that the tangent space at the boundary is zero. This simplifies the Lopatinskii--Shapiro condition \eqref{LScon} so that we are left to to show that for all $\lambda \in [\re z\geq 0]$ the only exponentially decaying solution of the boundary value problem on the half--line
 \begin{align}\label{LScon2}
 [\lambda+a_\pi(x,i\partial_t)]u =0,\quad t>0,\qquad b_\pi(x,i\partial_t)u(0)=0
 \end{align}
 is the zero solution. The argumentation in the sequel follows the lines in the proof of \cite[Lemma 4.1]{EMM} setting $\xi=0$. The boundary value problem \eqref{LScon2} is equivalent to
 \begin{align}\label{bvp}
 \left\{ \begin{array}{lcl}
 \lambda a^{11} u_1 +\lambda a^{12}u_2+u_1^{(4)}=0,\\[5pt]
 \lambda a^{21} u_1 +\lambda a^{22}u_2+u_2^{(4)}=0,
 \end{array}\right. \qquad t>0,
 \end{align}
 with initial conditions
 \begin{align*}
 u_1^\prime(0)=u_2^\prime(0)= u_1^{\prime\prime\prime}(0)=u_2^{\prime\prime\prime}(0)=0,
 \end{align*}
 where $u_i^{(k)}$ denotes the $k$th derivative of $u_i$, $i=1,2$ and the matrix $(a^{ij})_{1\leq i,j\leq 2}$ the inverse of $-\tilde{a}(X)$, which exists by $-\tilde{a}(X)$ being positive definite. 
 Since $\lambda\neq 0$, we can express $u_2$ in virtue of the first equation in \eqref{bvp} as
 \begin{align}\label{u2}
 u_2 =-\frac{1}{\lambda a^{12}}\left[ u_1^{(4)}+\lambda a_{11} u_1 \right], 
 \end{align}
so that
\begin{align*}
u_2^{(4)} =-\frac{1}{\lambda a^{12}}\left[ u_1^{(8)}+\lambda a_{11} u_1^{(4)} \right].
\end{align*}
By means of the above equations, we obtain from the second equation in \eqref{bvp} an $8$th--order ordinary differential equation for $u_1$:
\begin{equation}\label{8order}
u_1^{(8)}+\lambda [a^{11}+a^{22}]u_1^{(4)}+\lambda^2\left[a^{11}a^{22}-a^{12}a^{21}\right]u_1=0,\qquad t>0,
\end{equation}
with initial conditions
\begin{align}\label{iniconu1} 
u_1^\prime(0)= u_1^{\prime\prime\prime}(0)= u_1^{(5)}(0)=u_1^{(7)}(0)=0.
\end{align}
A general solution of \eqref{8order} is given by the polynomial
\begin{equation}\label{gensol}
u_1(t)= \displaystyle{\sum_{k=1}^8} c_k e^{\Lambda_k t},\qquad t\geq 0,
\end{equation}
where $\{\Lambda_k\in \C\mid k=1,\ldots,8\}$ are the roots of the characteristic polynomial
\begin{align*}
\Lambda^8+\lambda[a^{11}+a^{22}]\Lambda^4+\lambda^2\left[a^{11}a^{22}-a^{12}a^{21}\right]=0.
\end{align*}
A solution to the above equation of $8$th--order is given via
\begin{equation*}
\Lambda^4_{\pm}=  \frac{\lambda}{2}\left(-[a^{11}+a^{22}]\pm  \sqrt{ (a^{11}-a^{22})^2+4a^{12}a^{21} } \right)=: \lambda E_\pm,
\end{equation*}
with $ E_{\pm}<0$ and $E_+\neq E_-$.
Hence, the roots $\Lambda_k$ are given by
\begin{align*}
&\Lambda_{1/2}=\pm \frac{1}{\sqrt{2}}(1+i)\sqrt[4]{-E_+},\qquad \Lambda_{3/4}=\pm \frac{1}{\sqrt{2}}(1-i)\sqrt[4]{-E_+}\\[5pt]
&\Lambda_{5/6}=\pm \frac{1}{\sqrt{2}}(1+i)\sqrt[4]{-E_-},\qquad \Lambda_{7/8}=\pm \frac{1}{\sqrt{2}}(1-i)\sqrt[4]{-E_-}.
\end{align*}
Recall that $u_1$ is claimed to have exponential decay, which implies in virtue of $\re \Lambda_k>0$ for $k\in \{1,3,5,7\}$, that $c_1, c_3,c_5,c_7=0$. In view of \eqref{iniconu1} and \eqref{gensol} we deduce that
\begin{align*}
\begin{pmatrix} u_1^{(1)}(0) \\  u_1^{(3)}(0)\\ u_1^{(5)}(0)\\ u_1^{(7)}(0)\end{pmatrix} = \begin{pmatrix} \Lambda_2 &\Lambda_4 & \Lambda_6 & \Lambda_8 \\
																								\Lambda_2^3 &\Lambda_4^3 & \Lambda_6^3 & \Lambda_8^3 \\
																								\Lambda_2^5 &\Lambda_4^5 & \Lambda_6^5 & \Lambda_8^5 \\
																								\Lambda_2^7 &\Lambda_4^7 & \Lambda_6^7 & \Lambda_8^7 \\							
																				\end{pmatrix}
																				\begin{pmatrix}
																				c_2 \\ c_4 \\ c_6 \\ c_8
																				\end{pmatrix}=0.
\end{align*}
 Due to $E_+\neq E_-$ it is clear that $\Lambda_2\neq \Lambda_4\neq\Lambda_6\neq\Lambda_8$ and the determinant of the $4\times 4$ matrix above is different from zero. Hence, $c_2, c_4 , c_6 ,c_8=0$. This implies that $u_1=u_2=0$ in view of \eqref{u2} and \eqref{gensol}, which completes the proof. 
 Eventually, we conclude that, in virtue of Proposition \ref{A11analytic}, the operator $-A_c^0$ belongs to $\mathcal{H}(H_B^4(0,L,\R^2)\times H_B^2(0,L;\R), L_2(0,L;\R^3))$.

Hence, by means of a perturbation argument (cf. \cite[Theorem 1.3.1]{ALin} ), there exists $\e>0$, such that
\[-A_c(u) \qquad\mbox{belongs to} \quad \mathcal{H}(H_B^4(0,L,\R^2)\times H_B^2(0,L;\R), L_2(0,L;\R^3))\]
for all $u\in O^\alpha_\e$ and Theorem \ref{4MT} is a consequence of \eqref{needed} and \cite[Theorem 12.1]{ANon}.

\subsection{Asymptotic Stability}
We study the stability properties of equilibrium solutions to \eqref{system2}. Following the approach as in \cite{EW1, EMM}, the analysis is similar to the one applied in Section \ref{2AS}. In order to obtain that a local solution of \eqref{system2} as found in Theorem \ref{4MT} satisfies the energy equality \eqref{EF}, we need to improve the regularity. 

\begin{cor} The local solution $u$ found in Theorem \ref{4MT} admits  the regularity
\[u\in C^{\frac{5}{4}}((0,T);H_B^{1}(0,L;\R^2)\times H_B^{\frac{1}{2}}(0,L;\R)).\]
\end{cor}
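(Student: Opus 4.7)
The target space coincides with the complex interpolation space $E_{1/4}:=[E_0,E_1]_{1/4}$, where $E_0=L_2(0,L;\R^3)$ and $E_1=H_B^4(0,L;\R^2)\times H_B^2(0,L;\R)$ as in Section~\ref{K31}. Since $5/4>1$, membership in $C^{5/4}((0,T);E_{1/4})$ unfolds as $u\in C^1((0,T);E_{1/4})$ together with $\partial_t u\in C^{1/4}((0,T);E_{1/4})$.

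The first of these statements follows by a direct interpolation argument. Theorem~\ref{4MT} provides $u\in C^1((0,T);E_0)\cap C((0,T);E_1)$, so for all $t,s$ in a compact subinterval of $(0,T)$ the standard interpolation inequality gives
\[\|u(t)-u(s)\|_{E_{1/4}}\leq C\,\|u(t)-u(s)\|_{E_0}^{3/4}\,\|u(t)-u(s)\|_{E_1}^{1/4}\leq C\,|t-s|^{3/4},\]
hence $u\in C^{3/4}_{\mathrm{loc}}((0,T);E_{1/4})$, which is well above what is required for the continuity of $u$ into $E_{1/4}$.

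For the H\"older control of $\partial_t u$ I would run a parabolic bootstrap. Setting $w:=\partial_t u\in C((0,T);E_0)$ and differentiating $\partial_t u+A_c(u)u=0$ formally in time produces the linear non-autonomous equation
\[\partial_t w+A_c(u(t))\,w=-\bigl(D_uA_c(u(t))\cdot w\bigr)\,u(t).\]
By Theorem~\ref{Aanal} each $-A_c(u(t))$ generates an analytic semigroup on $E_0$. Interpolating $u\in C^1((0,T);E_0)\cap C((0,T);E_\alpha)$ with $\alpha\in(7/8,1)$ yields $u\in C^{1-\alpha}_{\mathrm{loc}}((0,T);E_\alpha)$, and combined with the Lipschitz dependence of $A_c$ on $u$ from \eqref{needed} this produces a H\"older-in-time coefficient family $\{A_c(u(t))\}_t$ with values in $\mathcal L(E_1,E_0)$. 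Invoking maximal H\"older regularity for non-autonomous analytic-semigroup problems (cf.~\cite{ANon} and \cite{Lun}) delivers $w\in C^\beta_{\mathrm{loc}}((0,T);E_1)\cap C^{1+\beta}_{\mathrm{loc}}((0,T);E_0)$ for some $\beta>0$, and a further interpolation at $\theta=1/4$ yields $w\in C^{3/4+\beta/4}_{\mathrm{loc}}((0,T);E_{1/4})\subset C^{1/4}_{\mathrm{loc}}((0,T);E_{1/4})$, which is the remaining ingredient.

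The principal obstacle is that $A_c$ is only $C^{1-}$ rather than $C^1$, so the formal derivative $D_uA_c(u)$ has to be realized via a difference-quotient argument, and one must verify that the inhomogeneity in the linearized equation lies in the function space required by the abstract regularity machinery. Moreover, the mixed-order block structure of $A_c$ -- fourth order in $(f,g)$ and second order in $\Gamma$ -- forces the bootstrap to be carried out componentwise, respecting the distinct parabolic scalings exploited in the proof of Theorem~\ref{Aanal}.
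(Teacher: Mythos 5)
Your argument has two genuine gaps. First, the interpolation inequality in your opening step only yields $u\in C^{3/4}_{\mathrm{loc}}((0,T);E_{1/4})$, i.e.\ H\"older continuity of exponent $3/4$; this is strictly weaker than the statement $u\in C^1((0,T);E_{1/4})$ that your own decomposition of $C^{5/4}$ requires, so the ``first of these statements'' is not in fact established. Second, and more seriously, the entire second half of your plan --- differentiating the quasilinear equation in time and applying maximal H\"older regularity to the resulting non-autonomous problem for $w=\partial_t u$ --- is left as a programme rather than a proof. You correctly identify the obstructions yourself: $A_c$ is only known to be $C^{1-}$ in $u$, so $D_uA_c(u)\cdot w$ must be constructed by hand, and the inhomogeneity $-(D_uA_c(u)\cdot w)u$ can only be placed in the space demanded by the abstract theory if $w$ already enjoys regularity that you are trying to prove, which is circular. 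None of these difficulties is resolved, so the bootstrap does not close.

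The paper's proof sidesteps all of this with a simpler device that your proposal misses: one never differentiates the equation. Instead, $u$ itself is regarded as the solution of the \emph{linear} non-autonomous problem $\partial_t w+\mathcal A(t)w=0$ with frozen coefficient $\mathcal A(t):=A_c(u(t))$. By \cite[Proposition II.1.1.2]{ALin} and the Sobolev embedding, the regularity $u\in C((0,T);E_1)\cap C^1((0,T);E_0)$ from Theorem \ref{4MT} already gives $u\in C^{1-\rho}((0,T);C^1([0,L];\R^2)\times C([0,L];\R))$ for $\rho\in(\frac38,1)$, hence $t\mapsto\mathcal A(t)$ is H\"older continuous into $\mathcal H(E_1,E_0)$; the regularizing effect for linear parabolic problems \cite[Theorem 10.1]{ANon} then upgrades $u$ to $C^{1-\rho}((0,T);E_1)\cap C^{2-\rho}((0,T);E_0)$, and a single interpolation \cite[Proposition 1.1.5]{Lun} at $\delta=\frac14$ delivers $u\in C^{5/4}((0,T);H_B^1(0,L;\R^2)\times H_B^{1/2}(0,L;\R))$. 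In particular no linearization in $u$, no treatment of $D_uA_c$, and no componentwise handling of the mixed-order structure is needed; if you want to salvage your approach you should replace the time-differentiation step by this freezing-of-coefficients argument.
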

\begin{proof} We follow the lines in \cite[Section 4.1]{EMM}.
Theorem \ref{4MT} provides that
\[u\in C((0,T);H_B^{4}(0,L;\R^2)\times H_B^{2}(0,L;\R)) \cap C^1((0,T);L_2(0,L;\R^3)).\]
By \cite[Proposition II.1.1.2]{ALin}, this implies that 
\[u\in C^{1-\theta}((0,T);H_B^{4\theta}(0,L;\R^2)\times H_B^{2\theta}(0,L;\R))\]
for $\theta \in [0,1]$.
For $\rho\in\left(\frac{3}{8},1\right)$, the Sobolev Embedding Theorem yields
\[u\in C^{1-\rho}((0,T);C^1([0,L],\R^2)\times C([0,L],\R)).\]
Since $A_c$ depends smoothly on its coefficients, we deduce from Theorem \ref{Aanal} that 
\[-A_c(u)\in C^{1-\rho}((0,T); \mathcal{H}(H_B^{4}(0,L;\R^2)\times H_B^{2}(0,L;\R))).\] 
Note that $w:=u$ solves the linear parabolic problem
\[\partial_t w +A_c(u)w=0,\qquad w(0)=u(0)=u^0.\]
By \cite[Theorem 10.1]{ANon}, the unique solution $w$ profits from the 'regularizing' effect for parabolic equations and we obtain, in view of $w=u$, that
\begin{equation}\label{regr}
u\in C^{1-\rho}((0,T);H_B^{4}(0,L;\R^2)\times H_B^{2}(0,L;\R))\cap C^{2-\rho}((0,T);L_2(0,L;\R^3)).
\end{equation}
Since $\rho\in\left(\frac{3}{8},1\right)$, \eqref{regr} yields in particular that $u\in C^{\frac{1}{2}}((0,T);H_B^{4}(0,L;\R^2)\times H_B^{2}(0,L;\R))\cap C^{\frac{3}{2}}((0,T);L_2(0,L;\R^3))$
and, by \cite[Proposition 1.1.5]{Lun},
\begin{align*}
&u\in C^{\frac{1}{2}}((0,T);H_B^{4}(0,L;\R^2)\times H_B^{2}(0,L;\R))\cap C^{\frac{3}{2}}((0,T);L_2(0,L;\R^3))\\[5pt]
&\qquad\qquad\qquad\subset C^{\frac{3}{2}-\delta}((0,T);H_B^{4\delta}(0,L;\R^2)\times H_B^{2\delta}(0,L;\R),L_2(0,L;\R^2)\times L_2(0,L;\R))
\end{align*}
for $\delta\in(0,1)$. Set $\delta=\frac{1}{4}$, then $u\in C^{\frac{5}{4}}((0,T);H_B^{1}(0,L;\R^2)\times H_B^{\frac{1}{2}}(0,L;\R))$.
\end{proof} 

The above Corollary allows to differentiate the energy functional in \eqref{EnF} with respect to time and we find \eqref{EF} satisfied for a solution $u$ given by Theorem \ref{4MT}. Modifying the arguments in Section \ref{2AS}, we infer that the only steady states are given by constants, provided that $\sigma^\prime <0$, and these equilibria are asymptotically stable.

\begin{thm}[Asymptotic Stability]\label{Asst} Let $\alpha \in (\frac{7}{8},1)$, $\sigma\in C^2(\mathbb{R})$ be strictly decreasing and Assumption S1), S2) be satisfied. Further, let $u_* =(f_*,g_*,\Gamma_*)$ be a positive steady state solution of \eqref{system2}. Then $f_*,g_*$ and $\Gamma_*$ are constant and  there exist numbers $\e_*=\e_*(f_*,g_*)>0, \omega>0$ and $M\geq 1$, such that for $0\leq \Gamma_*<\e_*$ and any initial data $u^0=(f^0,g^0,\Gamma^0)\in H_B^4(0,L,\R^2)\times H_B^2(0,L,\R)$ with $\langle f^0 \rangle = f_*$, $\langle g^0 \rangle = g_*$ and $\langle \Gamma^0 \rangle = \Gamma_*$ satisfying the smallness condition
\[ \|u^0-u_*\|_{H^4_B\times H^2_B}<\e_*,\]
the solution $u$ of \eqref{system2} found in Theorem \ref{4MT} exists globally and
\[\|u(t)-u_*\|_{H^4_B\times H^2_B}+\|\partial_t u(t)\|_2 \leq Me^{-\omega t}\|u^0-u_*\|_{H^4_B\times H^2_B}\qquad \mbox{for all}\quad t\geq 0.\]
\end{thm}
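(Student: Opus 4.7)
The plan is to adapt the strategy of Section~\ref{2AS} to the mixed-order fourth-order setting. The first step is to identify the steady states. Thanks to the preceding corollary, a local solution of \eqref{system2} has enough regularity for the energy identity \eqref{EF} (with $k=3$) to hold. For an equilibrium $u_* = (f_*, g_*, \Gamma_*)$ the dissipation vanishes, forcing every non-positive integrand on the right-hand side of \eqref{EF} to vanish. The term $\mu \Phi''(\Gamma_*) D |\partial_x \Gamma_*|^2$ yields $\Gamma_*$ constant and hence $\partial_x \sigma(\Gamma_*) = 0$; the squared brackets then give $\partial_x^3 (f_* + g_*) = 0$ and $\partial_x^3 ((R_3 + S_3 \mu) f_* + S_3 \mu g_*) = 0$. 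Since $R_3 \neq 0$ by S2), one deduces $\partial_x^3 f_* = \partial_x^3 g_* = 0$, and together with the Neumann conditions $\partial_x f_* = \partial_x g_* = 0$ at $x = 0, L$ this forces $f_*, g_*$ to be constants.

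Next, set $E_0 := L_2(0,L;\R^3)$, $E_1 := H_B^4(0,L;\R^2) \times H_B^2(0,L;\R)$, and introduce the projection $Pu := u - \langle u \rangle \in \mathcal{L}(E_0) \cap \mathcal{L}(E_1)$. Mass conservation (obtained by integrating \eqref{system2} over $(0,L)$ and using the boundary conditions) shows $(1-P)u(t) = u_*$ for all $t \geq 0$ whenever the initial averages match those of $u_*$. Writing $u = z + u_*$ with $z = Pu$, the function $z$ solves
\[
\partial_t z + A_c^* z = F(z), \qquad z(0) = u^0 - u_*,
\]
where $A_c^* := A_c(u_*)|_{PE_1}$ and $F(z) := \bigl(A_c^* - A_c(z + u_*)|_{PE_1}\bigr) z$. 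The divergence structure of $A_c$ combined with the Neumann conditions on $E_1$ gives $\langle A_c(v) w \rangle = 0$ for $w \in E_1$, so $A_c(v)w \in PE_0$ and $A_c^*$ indeed maps $PE_1$ into $PE_0$. From \eqref{needed} one obtains $F \in C^1(PE_1, PE_0)$ with $F(0) = 0 = F'(0)$.

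By Theorem~\ref{Aanal}, for $\Gamma_*$ small enough, $-A_c(u_*) \in \mathcal{H}(E_1, E_0)$. Exactly as in Lemma~\ref{I}, the block-diagonal representation of $-A_c(u_*)$ with respect to the splittings $E_i = PE_i \oplus (1-P)E_i$, together with \cite[Theorem I.1.6.3]{ALin}, yields $-A_c^* \in \mathcal{H}(PE_1, PE_0)$. The main obstacle is the spectral gap: to locate $\spec(-A_c^*)$ in a half-plane $[\re z \leq -\omega_0]$. Mimicking the proof of Lemma~\ref{K}, I would study the linearization in the rearranged variables $(f+g, f, \Gamma)$ and construct a symmetric matrix $b_c^z(u_*)$ so that, after integration by parts using the boundary conditions, the resulting bilinear form controls $\|\partial_x^2(f+g)\|_2^2 + \|\partial_x^2 f\|_2^2 + D\|\partial_x \Gamma\|_2^2$. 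At $\Gamma_* = 0$ the off-diagonal blocks involving $\sigma'(\Gamma_*)$ decouple, and a Schur-complement check using $\sigma_1^c, \sigma_2^c > 0$ (Assumption S2)) shows $b_c^z(f_*, g_*, 0)$ is positive definite for a suitable weight $z > 0$; a continuity argument in $\Gamma_*$ then extends positivity to an interval $0 \leq \Gamma_* < \e$.

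Combining this positive-definiteness with Poincar\'e's inequality on $PL_2$ (recall that functions in $PL_2$ have vanishing spatial mean) produces an equivalent-norm exponential decay of the linearized semigroup, as in the derivation of \eqref{An}. Hence $\spec(-A_c^*) \subset [\re z \leq -\omega_0]$. With the three ingredients at hand --- $F \in C^1$ with $F(0) = F'(0) = 0$, the analytic semigroup property $-A_c^* \in \mathcal{H}(PE_1, PE_0)$, and the spectral gap --- the \emph{principle of linearized stability} \cite[Theorem 9.1.2]{Lun}, applied on the interpolation scale $PE_\alpha$, furnishes the constants $\e_*, \omega, M$ and the claimed exponential decay estimate for $\|u(t) - u_*\|_{H_B^4 \times H_B^2} + \|\partial_t u(t)\|_2$.
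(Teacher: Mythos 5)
Your proposal is correct and follows essentially the same route the paper takes: the paper itself only states that the energy identity \eqref{EF} characterizes the steady states as constants and that the stability proof is obtained by ``modifying the arguments in Section \ref{2AS}'', which is precisely the program you carry out (projection $P$, the $z$-equation with $F(0)=F'(0)=0$, the block-diagonal argument for $-A_c^*\in\mathcal{H}(PE_1,PE_0)$, a weighted symmetric quadratic form with Poincar\'e for the spectral gap at small $\Gamma_*$, and the principle of linearized stability). Your sketch is in fact more detailed than the paper's; the only point needing care in a full write-up is that the mixed-order cross terms ($\partial_x^3 f$ against $\partial_x\Gamma$) must be absorbed via Poincar\'e/Cauchy--Schwarz rather than appearing directly in a single positive-definite matrix, but this is consistent with your plan.
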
 

 \section*{Acknowledgement}
I am grateful to Joachim Escher and Christoph Walker for proposing this topic of research and for various helpful discussions. This work was supported  by the Deutsche Forschungsgemeinschaft (DFG) (Graduiertenkolleg GRK 1463 Analysis, Geometry and Stringtheory).

\bibliographystyle{elsarticle-num}

\end{document}